\newcommand\llangle{\langle\!\langle}
\newcommand\rrangle{\rangle\!\rangle}
\newtheorem{theo}{Theorem}
\newtheorem{lemma}{Lemma}[section]
\newtheorem{prop}[lemma]{Proposition}
\newtheorem{coro}[lemma]{Corollary}
\theoremstyle{remark}
\newtheorem{remark}[lemma]{Remark}
\newcommand{\real}{\mathbb{R}}
\newcommand{\cP}{\mathcal{P}}
\newcommand{\cC}{\mathcal{C}}
\newcommand{\cS}{S}
\newcommand{\diag}{\operatorname{diag}}
\newcommand{\penta}{\operatorname{penta}}
\newcommand{\tr}{\operatorname{tr}}
\newcommand{\hull}{{\operatorname{conv}}}
\newcommand{\RR}{\mathbb{R}}
\newcommand{\Ss}{\mathbb{S}}
\newcommand{\bin}{\mathbf{in}}
\newcommand{\bout}{\mathbf{out}}
\newcommand{\Ein}{E_{\bin}}
\newcommand{\Eout}{E_{\bout}}
\newcommand{\hEout}{{\hat E}_{\bout}}
\newcommand{\Ptinout}{\cP_t(\Ein,\Eout)}
\newcommand{\Ctinout}{\cC_t(\Ein,\Eout)}
\newcommand{\Sinout}{{\mathcal S}_t(\Ein,\Eout)}
\newcommand{\Sfinout}{{\mathcal S}_f(\Ein,\Eout)}
\newcommand{\Stinout}{{\mathcal S}_t(\Ein,\Eout)}
\title{Higher dimensional versions of \\ theorems of Euler and Fuss}
\author{Peter Gibson \and Nicolau Saldanha \and Carlos Tomei}
\begin{document}
\maketitle

\begin{abstract}
We present higher dimensional versions of the classical results
of Euler and Fuss, both of which are special cases
of the celebrated Poncelet porism.
Our results concern polytopes,
specifically simplices, parallelotopes and cross polytopes,
inscribed in a given ellipsoid
and circumscribed to another.
The statements and proofs use the language of linear algebra.
Without loss, one of the ellipsoids is the unit sphere
and the other one is also centered at the origin.
Let $A$ be the positive symmetric matrix 
taking the outer ellipsoid to the inner one.
If $\tr A = 1$, there exists a bijection between
the orthogonal group $O(n)$ and the set of such labeled simplices.
Similarly, if $\tr A^2 = 1$, there are families of parallelotopes
and of cross polytopes, also indexed by $O(n)$.
\end{abstract}

{\noindent \textbf{MSC:}
51M20, 51M16, 52A40, 15A63,  52B11, 52C17, 52C07}

{\noindent \textbf{Keywords:}
Poncelet porism, polytopes, ellipsoids, fitting polytopes between ellipsoids}

\section{Introduction}

We recall the classic geometric Poncelet porism
\cite{AZ,Flatto,Tabashnikov}.
Consider two disjoint ellipses $\Eout, \Ein \subset \RR^2$
with $\Ein$ contained in $\hull(\Eout) \subset \RR^2$,
the convex hull of $\Eout$.
A polygon with $k$ vertices $P_0, P_1, P_2, \ldots P_k = P_0$
\textit{fits tightly} between $\Eout$ and $\Ein$
if its vertices belong to $\Eout$
and its sides are tangent to $\Ein$.
We implicitly assume $P_{i+2} \ne P_i$ (for all $i$).
Alternatively, we say the polygon is \textit{tight}.
Figure \ref{fig:euler-ellipse} shows two pairs of ellipses:
in the first example several triangles ($k = 3$) fit tightly;
in the second, several quadrilaterals ($k = 4$) also fit tightly.

\begin{figure}[ht]
\def\svgwidth{130mm}
\centerline{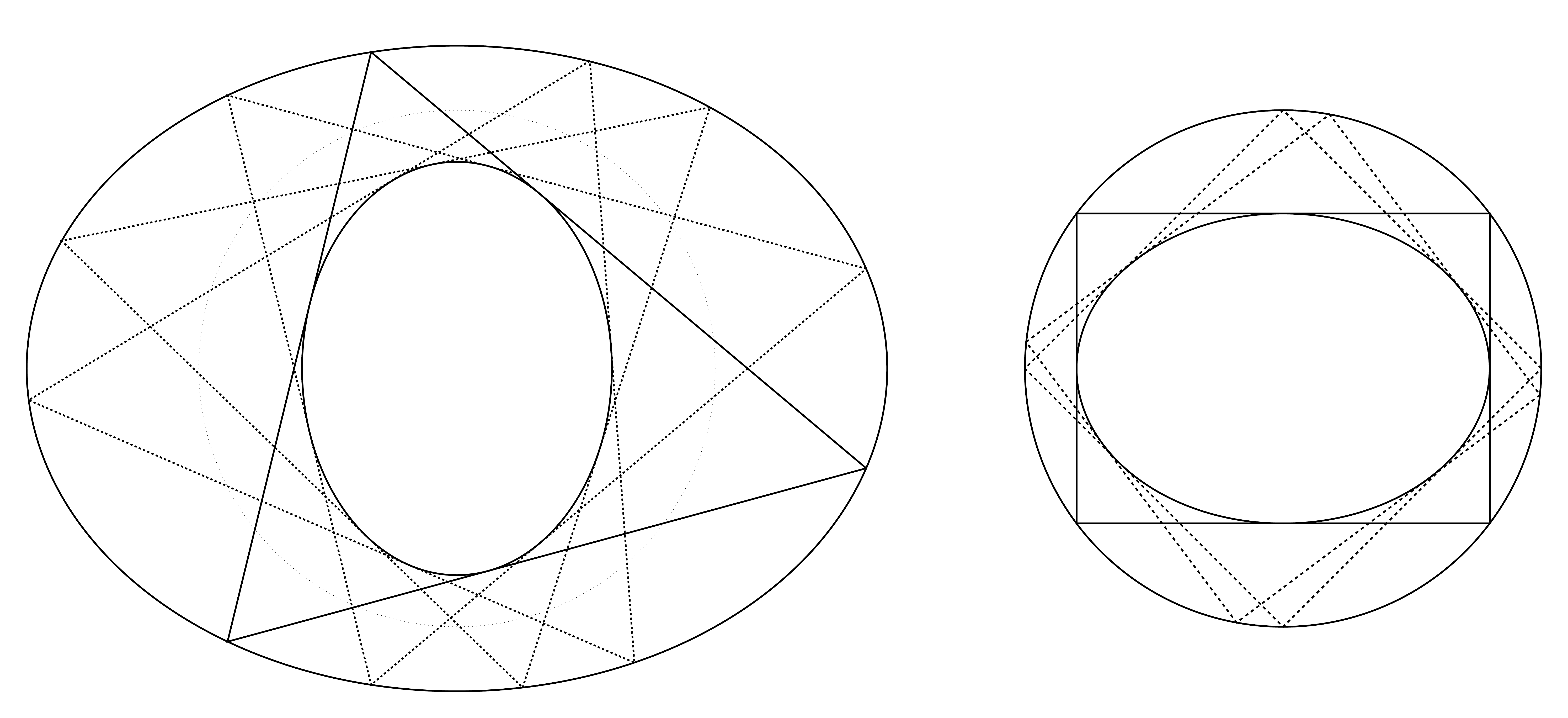}
\caption{Poncelet porism, $k = 3$ and $k = 4$}
\label{fig:euler-ellipse}
\end{figure}

\begin{theo}[Poncelet porism]
\label{theo:poncelet}
If the pair $\Eout, \Ein$ admits a tight polygon with $k$ vertices
then any point $Q_0 \in \Eout$ is a vertex of a tight polygon with $k$ vertices.
\end{theo}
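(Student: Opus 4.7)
The plan is to reformulate the closure condition as a translation on an elliptic curve. The central object is the incidence curve
\[ X = \{ (p, \ell) : p \in \Eout,\ \ell \text{ tangent to } \Ein,\ p \in \ell \}. \]
Viewed as a smooth complex projective curve, $X$ carries two natural degree-two projections: one to $\Eout$ (each $p \in \Eout$ lies on two tangents to $\Ein$) and one to the dual conic of tangents to $\Ein$ (each such tangent meets $\Eout$ in two points). The branch points of the first projection are exactly the four points of $\Eout \cap \Ein$ over $\mathbb{C}$, so Riemann--Hurwitz gives that $X$ has genus one; after choosing a base point it acquires the structure of an elliptic curve.

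Next I would introduce two involutions of $X$: the map $\sigma_1$ that fixes $p$ and swaps the two tangents through it, and the map $\sigma_2$ that fixes $\ell$ and swaps its two intersections with $\Eout$. The Poncelet step $T = \sigma_2 \circ \sigma_1$ sends $(p_i, \ell_i)$ to $(p_{i+1}, \ell_{i+1})$ along a tight polygon. Since every non-identity involution of an elliptic curve possessing a fixed point must take the form $x \mapsto c - x$ in the group law, the composition $T$ is a pure translation $x \mapsto x + \tau$ by some $\tau \in X$ depending only on the pair $(\Eout, \Ein)$.

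The closure of a tight $k$-gon starting at $(p_0, \ell_0)$ is precisely the equation $T^k(p_0, \ell_0) = (p_0, \ell_0)$, i.e.\ $k \tau = 0$ in the group structure on $X$. Since this condition does not involve the starting pair, the existence of one tight $k$-gon forces $T^k = \mathrm{id}$ globally. Given any $Q_0 \in \Eout$, choose either tangent $m_0$ to $\Ein$ through $Q_0$ and iterate $T$ on $(Q_0, m_0)$; the resulting vertices form a tight $k$-gon, with the hypothesis $P_{i+2} \neq P_i$ being exactly what rules out the degenerate $2$-cycle that would oscillate back and forth along a single tangent.

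The main obstacle is verifying that both $\sigma_1$ and $\sigma_2$ are genuine ``reflections'' in the elliptic group law, so that their composition is a translation; this rests on the classical description of the automorphism group of an elliptic curve as $E \rtimes \mathbb{Z}/2$. A secondary technical point is justifying smoothness of $X$ and handling mildly degenerate configurations (tangency of $\Eout$ and $\Ein$, coincident branch points), which can be done either by a small perturbation argument or by treating those cases separately.
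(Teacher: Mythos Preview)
The paper does not actually prove Theorem~\ref{theo:poncelet}; it is quoted as the classical Poncelet porism with references to the literature, and the only argumentative remark is that projective geometry reduces the general case to that of two circles, together with the historical note that the cases $k=3,4$ go back to Euler and Fuss. So there is no ``paper's own proof'' to compare your proposal against.

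That said, your outline is the standard modern proof (the Cayley/Griffiths--Harris approach): the incidence curve $X$ is an elliptic curve, the two natural involutions each have fixed points (the four complex points of $\Eout\cap\Ein$ for $\sigma_1$, the four common tangents for $\sigma_2$), hence each acts as $x\mapsto c-x$ in the group law, and their composition is translation by a fixed $\tau$, so $T^k=\mathrm{id}$ is a condition on $(\Eout,\Ein)$ alone. This is correct. Two small points worth tightening if you write it out in full: first, the genericity needed for $X$ to be smooth of genus one is exactly that the two conics meet in four distinct points over $\mathbb{C}$, which follows from $\Ein$ lying strictly inside $\hull(\Eout)$; second, the non-degeneracy $P_{i+2}\ne P_i$ of the \emph{constructed} polygon through $Q_0$ is not quite ``ruled out by hypothesis'' but rather follows because the existence of a genuine tight $k$-gon forces $2\tau\ne 0$, and translations on an elliptic curve have no fixed points unless they are the identity.
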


A little projective geometry \cite{Coxeter}
shows that the general case above
follows from the special case when $\Eout$ and $\Ein$ are both circles.
The special cases $k = 3$ and $k = 4$ of Theorem \ref{theo:poncelet}
(for circles)
were proved earlier by Euler and his student Fuss, respectively
\cite{Gheorghe}.
If $\Eout$ and $\Ein$ are circles of radii $R$ and $r$, respectively,
and the distance between the two centers is $d$,
then there exist tight triangles ($k = 3$) or
quadrilaterals ($k = 4$) if and only if
\begin{equation}
\label{equation:radii}
\frac{1}{(R-d)^{k-2}} + \frac{1}{(R+d)^{k-2}} = \frac{1}{r^{k-2}}, 
\qquad k \in \{3,4\}. 
\end{equation}


Projective geometry also implies that the general case
follows from another special case,
with $\Eout$ and $\Ein$ being ellipses centered at the origin,
as in Figure~\ref{fig:euler-ellipse}.
We state higher dimensional versions of the theorems of Euler and Fuss
in this context, starting with Fuss.

Consider two disjoint ellipsoids $\Eout, \Ein \subset \RR^n$
centered at the origin with $\Ein \subset \hull(\Eout)$.
By applying a linear transformation,
we assume that $\Eout = \Ss^{n-1}$, the unit sphere.
Clearly, there exists a unique positive symmetric matrix $A$
with $A\Eout = \Ein$.

A closed, convex polytope $P \subset \RR^n$
\textit{fits} between $\Eout$ and $\Ein$
if $\Ein \subset P \subset \hull(\Eout)$.
The polytope $P$ is \textit{inscribed} in $\Eout$
if all its vertices belong to $\Eout$
and $P$ is \textit{circumscribed} to $\Ein$
if all its hyperfaces are tangent to $\Ein$.
It fits \textit{tightly} between $\Eout$ and $\Ein$
if it is inscribed in $\Eout$ and circumscribed to $\Ein$.
We  usually consider \textit{labeled} polytopes, for which the
vertices are indexed.

A \textit{centrally symmetric parallelotope} in $\RR^n$
is a convex polytope with $2^n$ vertices of the form
$\pm v_1 \pm v_2 \pm \cdots \pm v_n$ where the vectors
$v_1, \ldots, v_n$ form a basis.
Thus, for $n = 2$ the polytope is a parallelogram
and for $n=3$, a parallelepiped.
A \textit{label} for a parallelotope is
a family $(v_k)_{1 \le k \le n}$ of vectors as above
so that each vertex in turn is labeled by a sequence of signs.
A centrally symmetric parallelotope is \textit{orthogonal}
if the basis $(v_k)$ is orthogonal.
As we shall see, if a parallelotope is inscribed in the unit sphere
then it is necessarily centrally symmetric and orthogonal.
For $\Eout$ and $\Ein$ as above,
let $\Ptinout$ be the set of all labeled
parallelotopes fitting tightly between $\Eout$ and $\Ein$.

As usual, $O(n)$ is the real orthogonal group.
Define the map
\[ \phi: \Ptinout \to O(n) \]
taking a parallelotope with label $(v_k)$
to the matrix $Q \in O(n)$ whose columns are obtained
from the basis $(v_k)$ by Gram-Schmidt orthonormalization.

\begin{theo}
\label{theo:fuss}
Let $\Eout = \Ss^{n-1}$ and $\Ein = A\Eout$, as above.
The set $\Ptinout$ is nonempty if and only if $\tr(A^2) = 1$.
In this case, the map $\phi: \Ptinout \to O(n)$
is a diffeomorphism.
\end{theo}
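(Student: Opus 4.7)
My plan is to translate the tightness conditions into algebraic identities on the basis $(v_k)$, deduce $\tr(A^2) = 1$ from them, and then construct the inverse of $\phi$ explicitly. Starting with inscription: expanding $\|\epsilon_1 v_1 + \cdots + \epsilon_n v_n\|^2 = 1$ for every $\epsilon \in \{\pm 1\}^n$ yields
\[
\sum_i \|v_i\|^2 + 2 \sum_{i<j} \epsilon_i \epsilon_j \langle v_i, v_j\rangle = 1;
\]
since the $\epsilon_i \epsilon_j$ can be flipped independently while the sum remains constant, every cross term must vanish, forcing $\langle v_i, v_j\rangle = 0$ for $i \ne j$ and $\sum_i \|v_i\|^2 = 1$. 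In particular, a centrally symmetric parallelotope inscribed in $\Ss^{n-1}$ is automatically orthogonal, recovering the assertion made earlier in the introduction.

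Next, I would translate circumscription. With $(v_k)$ orthogonal, the hyperface at $\epsilon_i = +1$ lies in the affine hyperplane $\{y : \langle v_i, y\rangle = \|v_i\|^2\}$. Writing $\Ein = \{y : y^T A^{-2} y = 1\}$ and using the standard fact that $\{y : \langle u, y\rangle = 1\}$ is tangent to $\Ein$ iff $u^T A^2 u = 1$, I obtain, after rescaling, the condition $e_i^T A^2 e_i = \|v_i\|^2$, where $e_i := v_i/\|v_i\|$. Summing over $i$, and using that $(e_i)$ is orthonormal so that $\sum_i e_i^T A^2 e_i = \tr(A^2)$, this combines with $\sum_i \|v_i\|^2 = 1$ to give $\tr(A^2) = 1$, establishing necessity.

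For the converse and diffeomorphism claim, assume $\tr(A^2) = 1$ and let $Q \in O(n)$ have orthonormal columns $e_1, \ldots, e_n$. Set $\lambda_i := e_i^T A^2 e_i > 0$ (positive since $A$ is positive definite) and $v_i := \sqrt{\lambda_i}\, e_i$; this produces an orthogonal basis with $\sum_i \|v_i\|^2 = \tr(A^2) = 1$ and satisfying the tangency equation $e_i^T A^2 e_i = \lambda_i = \|v_i\|^2$ by construction, giving a smooth candidate inverse $\psi : O(n) \to \Ptinout$. Because $(v_k)$ is orthogonal, Gram-Schmidt on $(v_k)$ is just normalization $v_k \mapsto v_k/\|v_k\|$, so one checks directly that $\phi \circ \psi = \mathrm{id}$ and $\psi \circ \phi = \mathrm{id}$; both maps are smooth, hence $\phi$ is a diffeomorphism. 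The subtle point requiring a short verification — arguably the only real obstacle — is that tangency of each face-hyperplane to $\Ein$ really yields $\Ein \subset P$, i.e., the tangent point $A^2 v_i / \|v_i\|^2$ lies inside the hyperface rather than merely in its ambient hyperplane; this follows from the Cauchy–Schwarz bound $|\langle v_j, A^2 v_i\rangle| \le \|v_j\|^2 \|v_i\|^2$ in the inner product $(u,v) \mapsto u^T A^2 v$, together with central symmetry placing $\Ein$ on the origin side of each face.
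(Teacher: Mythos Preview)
Your argument is correct and follows the same overall arc as the paper's: deduce orthogonality of the $(v_k)$ from inscription, extract $\|v_k\|^{2} = e_k^{T}A^{2}e_k$ from the tangency of the face-hyperplanes (the paper writes this as $\ell_k = |Aq_k|$), sum over $k$ to obtain $\tr(A^{2})=1$, and then invert by prescribing the edge half-lengths from $Q$. The tactical differences are minor but worth noting. You prove orthogonality by a direct sign-expansion, whereas the paper (Lemma~\ref{lemma:ortho}) reduces geometrically to the planar case; your phrase ``the $\epsilon_i\epsilon_j$ can be flipped independently'' is a little loose---flipping one $\epsilon_i$ changes several products at once---but the conclusion is immediate from, say, comparing the four vertices $\pm v_i \pm v_j + \sum_{k\ne i,j} v_k$, or from character orthogonality on $\{\pm 1\}^{n}$. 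For the inverse, the paper uses a scaling trick (rescale the candidate so it is inscribed in $\Eout$, apply the forward direction to $\hat A = A/R$, conclude $R=1$), while you simply compute $\sum_k \lambda_k = \tr(A^{2}) = 1$; your route is shorter and more transparent. Finally, you make explicit the containment check that the tangent point $A^{2}v_i/\|v_i\|^{2}$ actually lies in the hyperface, via Cauchy--Schwarz for the $A^{2}$-inner product together with $v_k^{T}A^{2}v_k = \|v_k\|^{4}$; the paper leaves this implicit.
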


The \textit{dual} or \textit{polar} of a bounded convex set
$X \subset \RR^n$ with $0 \in X$ is 
\[ \tilde X = \{ v \in \RR^n \;|\;
\forall w \in X, \langle v,w \rangle \ge -1 \}. \]
For instance, the dual of a centrally symmetric parallelotope
is a centrally symmetric cross polytope
(see Remark~\ref{remark:crosspolytope}).
We shall discuss duality further in Section \ref{section:construct}.

For $\Eout$ and $\Ein$ as above,
let $\Ctinout$ be the set of all labeled centrally symmetric
cross polytopes fitting tightly between $\Eout$ and $\Ein$.
The map $\phi: \Ctinout \to O(n)$ is defined similarly
to the parallelotope situation (see Section \ref{section:fuss}).
Duality applied to Theorem \ref{theo:fuss} above
gives us the following similar result:

\begin{coro}
\label{coro:fuss}
Let $\Ein = \Ss^{n-1}$ and $\Eout = A^{-1}\Eout$.
The set $\Ctinout$ is nonempty if and only if $\tr(A^2) = 1$.
Then, the map $\phi: \Ctinout \to O(n)$
is a diffeomorphism.
\end{coro}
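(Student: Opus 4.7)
The plan is to derive Corollary~\ref{coro:fuss} from Theorem~\ref{theo:fuss} by applying polarity. The polar map $X \mapsto \tilde X$ exchanges the inner and outer ellipsoids and carries centrally symmetric parallelotopes to centrally symmetric cross polytopes, so the corollary is essentially a translation of the theorem across duality.

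First, I would identify how polarity acts on the ambient ellipsoids. The closed unit ball is self-dual, and a direct computation shows that for positive symmetric $A$ the polar of the body bounded by $A\Ss^{n-1}$ is the body bounded by $A^{-1}\Ss^{n-1}$. Polarity reverses inclusion and exchanges ``vertex on $\Eout$'' with ``hyperface tangent to $\tilde{\Eout}$'' (and similarly for $\Ein$), so a polytope $P$ fits tightly between $\Ss^{n-1}$ and $A\Ss^{n-1}$ if and only if $\tilde P$ fits tightly between $A^{-1}\Ss^{n-1}$ (the new outer) and $\Ss^{n-1}$ (the new inner). This is precisely the configuration of the corollary.

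Next, I would use the structural fact flagged in Remark~\ref{remark:crosspolytope}: the polar of a centrally symmetric parallelotope with basis $(v_1, \ldots, v_n)$ is a centrally symmetric cross polytope with vertex pairs $\pm w_1, \ldots, \pm w_n$, where $(w_i)$ is the dual basis determined by $\langle w_i, v_j\rangle = \delta_{ij}$. This is an involutive, label-preserving correspondence, and combined with the ellipsoid identification above it restricts to a bijection $\Ptinout \to \Ctinout$. Together with Theorem~\ref{theo:fuss}, this yields the nonemptiness criterion: $\Ctinout$ is nonempty if and only if $\tr(A^2) = 1$.

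The remaining and main point is to verify that the $\phi$ map on $\Ctinout$, as defined in Section~\ref{section:fuss}, coincides with the composition of the polarity bijection with the $\phi$ of Theorem~\ref{theo:fuss}. Writing $V$ for the matrix whose columns are $(v_k)$ and $W = V^{-T}$ for the dual basis, the Gram-Schmidt factorization $V = QR$ yields $W = QR^{-T}$ with $R^{-T}$ lower triangular. One must therefore check that the cross polytope convention for $\phi$ (which, for compatibility, must process the labels in the reverse order, or equivalently extract the orthogonal frame from hyperface normals) produces the same $Q \in O(n)$ from $W$. This intertwining is the only genuinely new calculation; the definition in Section~\ref{section:fuss} is arranged precisely so that it holds. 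Once it is in place, the smoothness and bijectivity of $\phi : \Ctinout \to O(n)$ follow at once from the corresponding properties on $\Ptinout$ established in Theorem~\ref{theo:fuss}.
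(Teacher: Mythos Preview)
Your approach via polarity is correct and matches the paper's: the corollary is deduced from Theorem~\ref{theo:fuss} by duality, with the details placed in Remark~\ref{remark:crosspolytope}. The last paragraph is overcomplicated, however. By Lemma~\ref{lemma:ortho} the parallelotope basis $(v_k)$ is already orthogonal, so in your factorization $V = QR$ the matrix $R$ is \emph{diagonal}, not merely upper triangular; hence $W = V^{-T}$ has columns parallel to those of $V$, and no label reversal or hyperface-normal reinterpretation is needed. Accordingly, the paper's $\phi$ on $\Ctinout$ is simply $Qe_k = v_k/|v_k|$ for a cross polytope with vertices $\pm v_k$, and the intertwining with the parallelotope $\phi$ is immediate.
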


\bigskip

We now extend Euler's theorem to \textit{simplices} in $\RR^n$:
convex polytopes with $n+1$ vertices and nonempty interior.
For ellipsoids $\Eout$ and $\Ein$ as above,
let $\Sfinout$ (resp. $\Sinout$) be the set of labeled simplices
with vertices $v_1, \ldots, v_n, v_{n+1} \in \hull(\Eout)$
fitting (resp. fitting tightly) between $\Eout$ and $\Ein$,
so that
$\Sinout \subseteq \Sfinout$.



Define the map
\[ \phi: \Sfinout \to O(n) \]
taking a simplex $S \in \Sfinout$ with vertices $v_1, \ldots, v_n, v_{n+1}$
to the matrix $Q \in O(n)$ whose columns are obtained
from the basis $v_1, \ldots, v_n$ by Gram-Schmidt orthonormalization.

\bigskip


\begin{theo}
\label{theo:euler}
Let $\Ein = \Ss^{n-1} \subset \RR^n$ be the unit sphere.
Let $\Eout \subset \RR^n$ be a nondegenerate ellipsoid
centered at the origin.
Let $A$ be the unique positive definite real symmetric matrix $A$
such that $A\Eout = \Ein$.
\begin{enumerate}[(i)]
\item{If $\tr(A) > 1$,
no simplex fits between
$\Ein$ and $\Eout$:
$\Sfinout = \varnothing$.}
\item{If $\tr(A) = 1$,
every fitting simplex fits tightly: $\Sfinout = \Stinout$.
The map $\phi: \Sfinout \to O(n)$ is a diffeomorphism.}
\item{If $\tr(A) < 1$, $\Stinout \ne \Sfinout$.
The map $\phi: \Sfinout \to O(n)$ is surjective and not injective.}
\end{enumerate}
\end{theo}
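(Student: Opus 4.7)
My plan is to reduce the theorem to a rigidity identity characterizing tight simplices, then invert $\phi$ by a Gram--Schmidt-aligned construction for case (ii), and deduce cases (i) and (iii
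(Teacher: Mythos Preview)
Your outline matches the paper's strategy almost verbatim: the paper first proves a ``rigidity identity'' (Proposition~\ref{prop:tracebound}) showing that any fitting simplex forces $\tr A\le 1$, with equality exactly when the simplex is tight; it then builds $\phi^{-1}$ by an \emph{adjusted orthogonalization} (a Gram--Schmidt variant carried out in $\RR^{n+1}$ with the inner product $\llangle\cdot,\cdot\rrangle$ induced by $\hat A=A\oplus 1$); and it obtains (iii) from (ii) by a scaling argument.

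But what you have written is a plan, not a proof, and the substantive content lies precisely in the steps you have not supplied. Two places where you will need real ideas:

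\begin{enumerate}
\item For the rigidity identity, the paper lifts vertices $v_i$ and face normals $w_i$ to $\hat v_i=(v_i,1)$, $\hat w_i=(w_i,-t_i)\in\RR^{n+1}$, obtaining biorthogonal bases, and computes $\tr(A\oplus(-1))$ in this pair of bases to get
\[
\tr A-1=\sum_i\frac{\langle w_i,Av_i\rangle+t_i}{-t_i}\le 0.
\]
You need some device of this kind; simply asserting ``a rigidity identity'' is not enough.
\item For the inversion of $\phi$, the nontrivial point is that at each stage of the Gram--Schmidt-like process the affine line on which the next $\hat v_j$ must lie actually meets $\hEout$ in two points, and that one can consistently pick the correct one. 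In the paper this is handled via Lemmas~\ref{lemma:vstar}--\ref{lemma:vn+1}, using the monotonicity $\tr A_{W_1}<\tr A_{W_2}$ for $W_1\subsetneq W_2$ (Lemma~\ref{lemma:monotono}) to guarantee that the intermediate ``closing point'' $v_\star$ lies strictly inside $\Eout$. Your proposal gives no indication of how you intend to handle this.
\end{enumerate}

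Finally, case (i) is not really ``deduced'' from the construction; it comes directly from the trace inequality. Case (iii) does follow from (ii), but you should say how: the paper scales $\Eout$ by $s\in[\tr A,1]$ and applies (ii) to $s\Eout$, producing a one-parameter family of distinct fitting simplices with the same $\phi$-image.
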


\begin{remark}
\label{remark:supereuler}
In the situation of item (iii) above,
the cases $n = 2$ and $n \ge 3$ behave differently.
For $n = 2$, $\Stinout = \varnothing$:
this follows from the case $k=3$ of Poncelet porism.
If $n \ge 3$,
the restriction $\phi: \Stinout \to O(n)$
is still surjective and not injective.  
We shall present illustrative examples
but not the cumbersome computations.
\end{remark}

\bigskip

Our extensions of the classical results of Euler and Fuss
to higher dimensions provide a simple counterpart to
Equation~\eqref{equation:radii} in terms of traces.
This was possible because of our choice to work
with centrally symmetric ellipsoids.






\bigskip

Research by the first author
was partially supported by NSERC grant DG RGPIN-2022-04547.
Research by the second and third author 
was partially supported by CNPq, Faperj and Capes (Brazil).

\section{Fuss and parallelotopes}
\label{section:fuss}

\begin{lemma}
\label{lemma:ortho}
If a centrally symmetric parallelotope is inscribed in a sphere
then it is orthogonal.
\end{lemma}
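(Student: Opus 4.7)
The plan is to use the defining identity $\|\pm v_1 \pm \cdots \pm v_n\|^2 = \text{const}$ for all $2^n$ sign choices, and extract orthogonality of the $v_i$ by a character-sum (sign-averaging) argument.

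First, I would observe that the sphere must be centered at the origin. Because the parallelotope is centrally symmetric (by our convention with vertices $\pm v_1 \pm \cdots \pm v_n$, the center of symmetry is the origin), its vertex set is invariant under $p \mapsto -p$. If $c$ is the center of the circumscribed sphere of radius $R$, then $\|p-c\|^2 = \|{-p}-c\|^2 = R^2$ for each vertex $p$, which gives $\langle p,c\rangle = 0$ for every vertex $p$. Since the $2^n$ vertices span $\mathbb{R}^n$ (as the $v_i$ form a basis; e.g.\ $v_i = \tfrac{1}{2}((v_1+\cdots+v_i+\cdots+v_n)+(-v_1-\cdots+v_i-\cdots-v_n))$), this forces $c=0$.

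Second, with the sphere centered at $0$ of radius $R$, the condition becomes
\[
\Bigl\| \sum_{k=1}^n \epsilon_k v_k \Bigr\|^2
= \sum_{k=1}^n \|v_k\|^2 + 2\sum_{i<j} \epsilon_i\epsilon_j \langle v_i,v_j\rangle = R^2
\]
for every $(\epsilon_1,\ldots,\epsilon_n)\in\{\pm 1\}^n$. Fix a pair $i<j$. I would multiply this identity by $\epsilon_i\epsilon_j$ and sum over all sign choices. On the right-hand side, $R^2 \sum \epsilon_i\epsilon_j = 0$. On the left-hand side, the diagonal terms $\|v_k\|^2$ also sum to $0$ for the same reason, while a generic off-diagonal term $\epsilon_a\epsilon_b\langle v_a,v_b\rangle$ (with $a<b$) produces the sum $\sum \epsilon_i\epsilon_j\epsilon_a\epsilon_b$, which vanishes by orthogonality of characters unless $\{a,b\}=\{i,j\}$. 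The surviving contribution is $2^{n+1}\langle v_i,v_j\rangle$, and so $\langle v_i,v_j\rangle = 0$. Since $i<j$ was arbitrary, the basis $(v_k)$ is orthogonal, which by definition means the parallelotope is orthogonal.

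I do not anticipate a serious obstacle here: the argument is essentially a discrete Fourier (Walsh) orthogonality computation, and the only mildly delicate point is checking that the inscribing sphere is automatically centered at the origin, which the antipodal symmetry of the vertex set makes immediate.
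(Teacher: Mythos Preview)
Your argument is correct. The character-sum averaging cleanly isolates each $\langle v_i,v_j\rangle$, and your preliminary step forcing the sphere to be centered at the origin is both necessary for your expansion and correctly justified.

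The paper takes a different route. Its main proof is geometric: it first handles $n=2$ (a parallelogram inscribed in a circle is a rectangle), and then for general $n$ observes that for any pair $i<j$ the four vertices $\tilde w \pm v_i \pm v_j$ (with $\tilde w=\sum_{k\ne i,j} v_k$) lie on a planar section of the sphere, hence on a circle, so the $2$-dimensional case gives $v_i\perp v_j$. A remark then offers an algebraic variant closer in spirit to yours, but still local: it uses only those same four vertices and the polarization identity to get $\langle v_i,v_j\rangle=0$ directly. By contrast, you average over all $2^n$ vertices at once via Walsh orthogonality. Your approach is slightly heavier machinery for the same conclusion, but it has the advantage of being completely uniform and mechanical; the paper's approach has the advantage that it never needs to pin down the center of the sphere (the planar-section argument is indifferent to it), whereas your expansion does require that preliminary step.
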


\begin{proof}
Notice first that for $n = 2$ the lemma says
that a parallelogram inscribed in a circle is a rectangle:
this follows from the fact that both diagonals have the same length.

For the general case,
consider a centrally symmetric parallelotope 
with vertices $\pm w_1 \pm \cdots \pm w_n$
inscribed in a sphere $\Eout$ of radius $R$.
Consider $i$ and $j$ with $1 \le i < j \le n$:
the four vertices 
$\tilde w \pm w_i \pm w_j$, 
$\tilde w = - w_i - w_j + \sum_k w_k$,
are inscribed in the intersection of a $2$-dimensional plane
with the sphere, which is of course a circle.
The case $n = 2$ implies $w_i \perp w_j$.
\end{proof}

\begin{remark}
For an alternative proof, use the notation above and notice that
$|\tilde w \pm w_i \pm w_j| = R$ for any choice of signs.
We then have
\[
\langle \tilde w \pm w_j, w_i \rangle = \frac14
\left( |\tilde w \pm w_j + w_i|^2 - |\tilde w \pm w_j - w_i|^2 \right) =
\frac{R^2 - R^2}{4} = 0 
\]
and therefore $\langle w_i, w_j \rangle = 0$.
\end{remark}

\begin{proof}[Proof of Theorem \ref{theo:fuss}]
We first prove that if $\Ptinout \ne \varnothing$ then $\tr(A^2) = 1$.
Take $P \in \Ptinout$.
From Lemma \ref{lemma:ortho}, $P$ is an orthogonal parallelotope.
The edges of $P$ are parallel to $q_k$,
where $(q_k)$ is an orthonormal basis.
The map $\phi$ takes $P$ to $Q \in O(n)$ with columns $(q_k)$.

Let $2\ell_k$ be the length of the edge in the direction $q_k$
so that the vertices of $P$ are
$\pm \ell_1 q_1 \pm \cdots \pm \ell_k q_k \pm \cdots \pm \ell_n q_n$;
the faces are the hyperplanes $H_{k,\pm}$ of equations
$\langle \cdot, q_k \rangle = \pm \ell_k$.
By Pythagoras we have $\sum_k \ell_k^2 = 1$.
Let $v_k$ be the point of tangency between $\Ein$ and $H_{k,+}$.
By definition of $A$, $v_k$ has the form $v_k = Au_k$, $u_k \in \Eout$.
We determine $u_k$ and $v_k$.

The vector $u_k$ maximizes $\langle q_k, Ax \rangle$
with the restriction $|x| = 1$.
Since $A$ is symmetric, 
$\langle q_k, Ax \rangle = \langle x, Aq_k \rangle$
and then
\[ u_k = x = \frac{A q_k}{|A q_k|}, \qquad
v_k = A u_k = \frac{A^2 q_k}{|A q_k|}. \]
Since $\langle v_k, q_k \rangle = \ell_k$, we have
\[ \ell_k =
\frac{\langle A^2 q_k, q_k \rangle}{|A q_k|} = 
\frac{\langle A q_k, A q_k \rangle}{|A q_k|} = 
|A q_k|. \]
We compute the trace in the basis $(q_k)$, 
thus proving the first claim:
\[ \tr(A^2) = \sum_k \langle A^2 q_k, q_k \rangle =
\sum_k \langle A q_k, A q_k \rangle = 
\sum_k |A q_k|^2 = \sum_k \ell_k^2 = 1. \]

We are left with proving that
the map $\phi: \Ptinout \to O(n)$ constructed above is invertible.
Take $Q \in O(n)$ with columns $(q_k)$.
Consider the parallelotope $P$ with vertices
$\pm \ell_1 q_1 \pm \cdots \pm \ell_k q_k \pm \cdots \pm \ell_n q_n$
where each $\ell_k > 0$ is chosen so that the ellipsoid $\Ein$
is tangent to the hyperplane $H_{k,+}$ of equation
$\langle \cdot, q_k \rangle = \ell_k$.
We prove that $P \in \Ptinout$ and $\phi(P) = Q$.
Clearly, $P$ is inscribed in a sphere $R \Eout$
of radius $R > 0$, $R^2 = \sum \ell_k^2$.
We need to verify that $R = 1$.

Indeed, consider a scaled parallelotope $\hat P = (1/R) P$,
inscribed in $\Eout$
and circumscribed to $\widehat\Ein = (1/R) \Ein$.
The matrix $\hat A = (1/R) A$ satisfies $\hat A \Eout = \widehat\Ein$
and therefore, from the previous paragraph, $\tr(\hat A^2) = 1$.
But $\tr(\hat A^2) = R^{-2}$, so that $R = 1$, as desired.
The fact that $\phi(P) = Q$ is obvious.
\end{proof}

\begin{remark}
The hypothesis of polytopes being
centrally symmetric parallelotopes is essential.
Indeed, for dimension $n = 3$ consider parameters $r > 0$ and $s \ge 1$
and construct the convex polyhedron $P$
with $8$ vertices:
$(\pm rs, \pm rs^{-1}, r)$, $(\pm rs^{-1}, \pm rs, -r)$.
For $s = 1$, $P$ is a cube.
In general, this polyhedron has $6$ faces:
two rectangles in the planes $z = \pm r$ and four trapezoids,
as shown in Figure \ref{fig:fakecube}.

\begin{figure}[ht]
\def\svgwidth{75mm}
\centerline{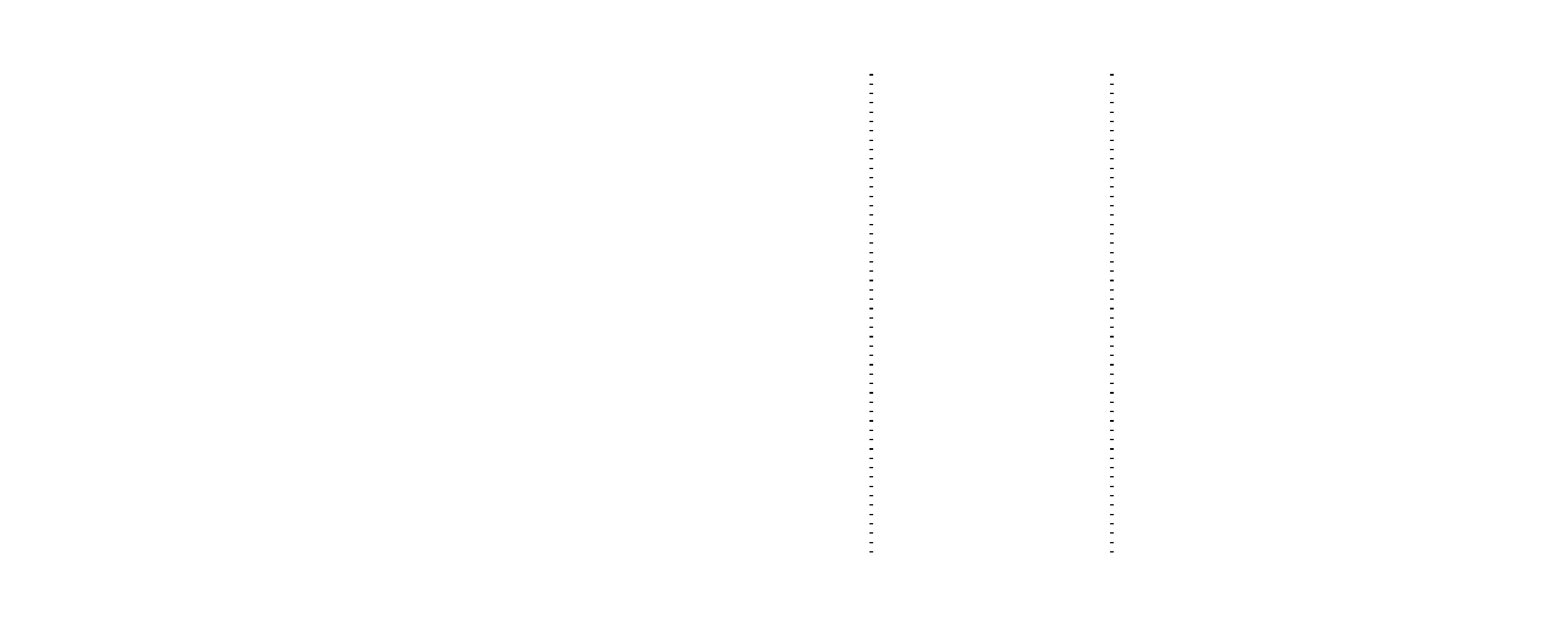}
\caption{A convex polytope which is both inscribable and circumscribable.}
\label{fig:fakecube}
\end{figure}

A simple computation verifies that $P$ is inscribed
in the sphere $\Eout$ of radius $R = r \sqrt{s^2 + 1 + s^{-2}}$;
$P$ is also circumscribed to the sphere $\Ein$ of radius $r$.
The positive symmetric matrix $A$ with $A\Eout = \Ein$
is therefore $A = (r/R) I$, with trace $\tr(A) = 3/\sqrt{s^2 + 1 + s^{-2}}$.
By adjusting the value of $s$,
$\tr(A)$ can assume any value in the interval $(0,1)$.
Similar constructions are possible in higher dimension ($n > 3$)
but not in the plane ($n=2$).
The situation is reminiscent of Remark~\ref{remark:supereuler}.
\end{remark}

\begin{remark}
\label{remark:crosspolytope}
We provide details concerning Corollary~\ref{coro:fuss}.
A \textit{centrally symmetric cross polytope} in $\RR^n$
is a convex polytope with non empty interior and
vertices $\pm v_1, \ldots, \pm v_n$.
For $n = 2$, a cross polytope is a parallelogram;
for $n = 3$, an octahedron.


It is easy to verify that
the dual of a centrally symmetric parallelotope
is a centrally symmetric cross polytope.
Thus, Corollary~\ref{coro:fuss}
follows from Theorem \ref{theo:fuss} via duality, and vice versa.
Define the map $\phi: \Ctinout \to O(n)$
taking a cross polytope with vertices $\pm v_k$
to $Q \in O(n)$ with $Qe_k = v_k/|v_k|$:
the map $\phi$ is a diffeomorphism.
\end{remark}


\section{Euler and simplices}
\label{section:simplices}

As in the statement of Theorem \ref{theo:euler},
$A$ is the unique positive definite real symmetric matrix
such that $A(\Eout) = \Ein = \Ss^{n-1}$.
We then have
\[ \Ein = \{ v \in \RR^n \;|\; \langle v,v\rangle = 1 \}, \qquad
\Eout = \{ v \in \RR^n \;|\; \langle Av,Av\rangle = 1 \}. \]
Bases are understood to be families such as $(v_i)_{1 \le i \le n}$:
labeling is important.

\begin{prop}
\label{prop:tracebound}
If $\Sfinout \ne \varnothing$  then $\tr A \le 1$.
Moreover, if $\tr A = 1$, then $\Sfinout = \Stinout$.
\end{prop}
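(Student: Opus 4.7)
The plan is to derive a single matrix identity for a simplex from the divergence theorem, take a trace against $A$, and read off the bound. Fix any $P \in \Sfinout$ with vertices $v_0, v_1, \ldots, v_n$. For each index $j$, let $F_j$ be the face opposite $v_j$, let $\nu_j$ be its outward unit normal, let $V_j$ denote its $(n-1)$-dimensional volume, and let $h_j$ denote the distance from the origin to the hyperplane containing $F_j$. Let $V$ be the $n$-dimensional volume of $P$. The fitting hypothesis translates into $|Av_j| \le 1$ for every $j$ (because $v_j \in \hull(\Eout)$) and $h_j \ge 1$ for every $j$ (because $\Ein = \Ss^{n-1} \subset P$).

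The key step is the identity
\[
\sum_{j} V_j\, v_j\, \nu_j^T \;=\; -\,nV\, I,
\]
which I would prove by the divergence theorem. Writing the integrand componentwise, $\int_P \partial_k x_i\,dx = \delta_{ik}V$, while on the boundary the integrand $x\nu^T$ is constant-normal on each face, giving $\int_{\partial P} x\,\nu^T\,dS = \sum_j V_j\, \bar x_j\, \nu_j^T$, where $\bar x_j = \frac{1}{n}\sum_{l\ne j} v_l$ is the centroid of $F_j$. Substituting $\sum_{l\ne j} v_l = (n+1)g - v_j$ with $g$ the vertex centroid, and using the vector-area identity $\sum_j V_j \nu_j = 0$ to kill the $g$-term, the formula drops out.

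Taking the trace against $A$ yields
\[
-nV\,\tr(A) \;=\; \sum_j V_j\,\langle Av_j,\nu_j\rangle,
\]
and now each factor is controlled. By Cauchy--Schwarz $\langle Av_j,\nu_j\rangle \ge -|Av_j|$; by the fitting hypothesis $|Av_j|\le 1$; and from the pyramidal decomposition $\sum_j V_j h_j = nV$ together with $h_j \ge 1$ we have $\sum_j V_j \le nV$. Chaining these three inequalities,
\[
-nV\,\tr(A) \;=\; \sum_j V_j\langle Av_j,\nu_j\rangle \;\ge\; -\sum_j V_j |Av_j| \;\ge\; -\sum_j V_j \;\ge\; -nV,
\]
which gives $\tr(A) \le 1$. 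For the second assertion, suppose $\tr(A)=1$. Then all three inequalities above must be equalities: the second forces $|Av_j|=1$ for every $j$ (so every vertex lies on $\Eout$), and the third forces $h_j=1$ for every $j$ (so every face is tangent to $\Ein$). Hence $P \in \Stinout$, proving $\Sfinout = \Stinout$.

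The main obstacle is recognizing that the relevant invariant of the simplex is the rank-one matrix sum $\sum_j V_j v_j \nu_j^T$ rather than the more common scalar identities $\sum_j V_j \nu_j = 0$ or $\sum_j V_j h_j = nV$; once this is in hand, the proof is a one-line chain of three inequalities, each with a transparent equality condition that pins down exactly the inscribed/circumscribed conditions defining tightness.
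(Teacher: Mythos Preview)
Your proof is correct and takes a genuinely different route from the paper's. The paper lifts everything to $\RR^{n+1}$: it sets $\hat v_i=(v_i,1)$ and $\hat w_j=(w_j,-t_j)$, shows these form biorthogonal bases, and computes $\tr(A\oplus(-1))$ in that basis to obtain the identity
\[
\tr A-1=\sum_i\frac{\langle w_i,Av_i\rangle+t_i}{-t_i},
\]
from which the bound and the equality characterization follow. You instead stay in $\RR^n$ and extract the single matrix identity $\sum_j V_j\,v_j\nu_j^T=-nV\,I$ from the divergence theorem, then trace against $A$ and chain three elementary inequalities. Your argument is more geometric and arguably more elementary; it also makes the three separate equality conditions (Cauchy--Schwarz, $|Av_j|=1$, $h_j=1$) completely transparent, and as a bonus the first one recovers the self-duality relation $Av_j=-\nu_j$ that the paper establishes later. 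The paper's approach, on the other hand, introduces the $\RR^{n+1}$ lifting and the $\llangle\cdot,\cdot\rrangle$-orthogonality of the $\hat v_i$ precisely because this machinery is reused throughout Section~4 to construct the inverse map $\phi^{-1}$; your method proves the proposition cleanly but does not set up that framework.
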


Thus, there are no fitting simplices if $\tr A > 1$.
For $\tr A = 1$, a fitting simplex is tight.
Part (i) and the first claim in part (ii) of Theorem \ref{theo:euler}
follow from the proposition.


\begin{proof}[Proof of Proposition \ref{prop:tracebound}]
Take $\cS \in \Sfinout$ with vertices $(v_i)_{1 \le i \le n+1}$.
In particular,
$\hull(\{v_1,\ldots,v_{n+1}\})$
contains the origin in its interior.
A hyperface $F_i \subset \cS$ is the convex closure
of the vertices $v_j$, $j \ne i$,
and belongs to a hyperplane $H_i$.
Take $w_i \in \Ss^{n-1}$ the closest point to $H_i$
and $t_i \ge 1$ such that $t_iw_i \in H_i$:
\[ H_i =
\{ q \in \RR^n \ |
\ \langle q, t_i w_i \rangle = \langle t_i w_i, t_i w_i \rangle= t_i^2\}. \]
We must then have $\langle v_i , w_j \rangle = t_i \ge 1$ for $i \ne j$;
on the other hand, $\langle v_i, w_i \rangle < 0$,
otherwise  $0$ would not belong to the interior of
$\hull(\{v_1,\ldots,v_{n+1}\})$.

Define extended vectors
$\hat v_i = (v_i, 1), \hat w_j = (w_j, -t_j) \in \RR^{n+1}$.
For $i \ne j$ we have $\langle \hat v_i, \hat w_j \rangle = 0$;
also, $\langle \hat v_i, \hat w_i \rangle < - t_i \le -1$.
We show that the families
$(\hat v_i)_{1 \le i \le n+1}$ and
$(\hat w_i)_{1 \le i \le n+1}$
form bases of $\RR^{n+1}$.
Indeed, if $\sum_i c_i \hat v_i = 0$,
the inner product with $\hat w_j$ gives
$c_i \langle \hat v_i, \hat w_i \rangle =0$, so that $c_i = 0$.
The same argument applies to $(\hat w_i)$.
Said differently, $(\hat v_i)$ and $(\hat w_i)$ are biorthogonal bases.

We phrase these properties in matrix notation.
Let ${\hat V}$ and ${\hat W}$
be $(n+1) \times (n+1)$ matrices
with columns given by $(\hat v_i)$ and $(\hat w_i)$ respectively.
We then have
\[ {\hat W}^T {\hat V} = {D} =
\diag(\langle \hat v_1, \hat w_1 \rangle, \ldots,
\langle \hat v_{n+1}, \hat w_{n+1} \rangle)
= \diag(-t_1,\ldots,-t_{n+1}). \]
We have ${D} \le - I$
in the sense that $\langle u, (D+I) u \rangle \le 0$ for all $u \in \RR^{n+1}$.
The trace of a real $(n+1) \times (n+1)$ matrix $X$ is
\begin{equation}
\label{equation:trace}
\tr X =
\tr {\hat W}^T X ({\hat W}^T)^{-1} =
\tr {\hat W}^T X {\hat V}{D^{-1}}
=
\sum_i \frac{ \langle\hat w_i, X \hat v_i \rangle}{(-t_i)}.
\end{equation}

Let $A_{-} = A \oplus (-1)$ be the $(n+1)\times(n+1)$ matrix
obtained from $A$ by adding a final row and column of zeroes
and an entry equal to $-1$ in position $(n+1,n+1)$.
From Equation \eqref{equation:trace} for $X = A_{-}$,
\begin{equation} \label{equation:traceaminus}
 \tr A_{-} =
\sum_i \frac{ \langle\hat w_i, A_{-} \hat v_i \rangle}{(-t_i)}
= \sum_i \frac{ \langle w_i,  A  v_i \rangle + t_i}{(-t_i)}.
\end{equation}
As $w_i \in \Ss^{n-1}$, $t_i \ge 1$ and $A v_i \in \hull(\Ein)$,
the numerators are greater or equal to zero.
Thus $\tr A_{-} = \tr A - 1 \le 0$.

Now suppose $\tr A = 1$ and $S \in \Sfinout$.
From the computations above,
$\tr A = 1$ if and only if the numerators in
Equation (\ref{equation:traceaminus}) are equal to zero,
that is,
if and only if $\langle w_i, A v_i \rangle = -t_i$ for all $i$.
As $w_i \in \Ss^{n-1}$, and $A v_i \in \hull(\Ein)$,
we have (by Cauchy-Schwartz)
$\langle w_i, A v_i \rangle \ge -1$
with equality if and only if $A v_i = - w_i$.
Since $t_i \ge 1$
we must have $t_i = 1$ and $A v_i = - w_i$.
Thus, the hyperplane $H_i$ containing the face $F_i$ is tangent to $\Ein$:
$S$ is circumscribed to $\Ein$.
Moreover, $w_i = -A v_i$ implies $v_i \in \Eout$, i.e.,
$S$ is inscribed in $\Eout$,
proving that $S \in \Stinout$, and therefore that $\Stinout = \Sfinout$.
\end{proof}

\begin{remark}
\label{remark:tighttetrahedron}
Take $n = 3$ and $a, b, c > 1$. Let
\[ \Eout = \left\{ (x,y,z) \in \RR^3 \;|\;
\frac{x^2}{a^2}+\frac{y^2}{b^2}+\frac{z^2}{c^2} = 1 \right\}, \qquad
A = \begin{pmatrix}
\frac{1}{a} & 0 & 0 \\ 0 & \frac{1}{b} & 0 \\ 0 & 0 & \frac{1}{c}
\end{pmatrix}. \]
For $\tilde w_1 = -(x,y,z) \in \Ein = \Ss^2$,
consider the tetrahedron $S$ with vertices
$v_1 = -A^{-1}\tilde w_1 = (ax,by,cz) \in \Eout$, $v_2 = (ax,-by,-cz)$, 
$v_3 = (-ax,by,-cz)$ and $v_4 = (-ax,-by,cz)$,
so that $S$ is inscribed in $\Eout$.
These four vertices are alternating vertices of a rectangular
parallelepiped with edges parallel to the axes.
Following the construction in the proof of
Proposition \ref{prop:tracebound}, and setting
\[
F(x,y,z) = (ax)^{-2} + (by)^{-2} + (cz)^{-2},
\]
 we have
$t_1 = t_2 = t_3 = t_4$ and
\[
w_1 = - \frac{1}{t_1F(x,y,z)}
\left(\frac{1}{ax}, \frac{1}{by}, \frac{1}{cz} \right), \quad
F(x,y,z) = t_1^{-2}. \]
Thus, $S$ is circumscribed to a sphere of radius $t_1$:
it fits (resp. tightly) between $\Ein$ and $\Eout$
if and only if $F(x,y,z) \le 1$ (resp. $F(x,y,z) = 1$).


In order to study the function $F$,
it suffices to consider the octant $x, y, z \ge 0$:
$F$ goes to infinity at the boundary and has a unique critical point,
a global minimum, at
$x^2 = a^{-1}/\tr A$, $y^2 = b^{-1}/\tr A$, $z^2 = c^{-1}/\tr A$.
Thus, the minimum of $F$ is $(\tr A)^2$.
If $\tr A > 1$, no tetrahedron in this family fits,
consistently with Proposition \ref{prop:tracebound}.
If $\tr A = 1$ we construct
a unique tetrahedron which fits, and fits tightly.
If $\tr A < 1$, there exists a closed disk of values
of $(x,y,z)$ in the first octant for which the tetrahedron fits;
on the boundary of the disk, the tetrahedron fits tightly,
consistently with Remark \ref{remark:supereuler}.

Given $\Eout$ with $\tr A < 1$,
there exist similar tight tetrahedra in other positions;
the algebra for such examples is far more complicated.
\end{remark}

\bigskip

\section{Constructing simplices}
\label{section:construct}

In this section we complete the proof of Theorem \ref{theo:euler}.
More concretely, 
we construct the map $\phi^{-1}: O(n) \to \Sfinout$ when $\tr(A) = 1$.

For a tight simplex $S$, let 
$(v_i)_{1 \le i \le n+1}$
denote its vertices and 
$(w_i)_{1 \le i \le n+1}$
the family of points of tangency of its hyperfaces with $\Ein$.
Set $\tilde v_i =  - A^{-1} w_i$ and $\tilde w_i = - A v_i$.
The simplex $\tilde S$ with vertices $(\tilde v_i)$
is another tight simplex for the same pair of ellipsoids,
as follows from expanding the corresponding algebraic expressions.
The points of tangency to the hyperfaces of $\tilde S$ are $(\tilde w_i)$.
We call $\tilde S$ the \textit{dual} of $S$;
$S$ is \textit{self-dual} if $\tilde S = S$.
Figure \ref{fig:selfdual} shows an example with $n = 2$:
a self-dual triangle $S \in \Stinout$.
We have $\langle v_i , w_j \rangle = 1$ for $i \ne j$,
which in turn gives that $v_i$ is equidistant to all points $w_j$,
since $w_j \in \Ein = \Ss^{n-1}$, a simple geometric fact.

\begin{figure}[ht]
\def\svgwidth{90mm}
\centerline{
\begingroup%
  \makeatletter%
  \providecommand\color[2][]{%
    \errmessage{(Inkscape) Color is used for the text in Inkscape, but the package 'color.sty' is not loaded}%
    \renewcommand\color[2][]{}%
  }%
  \providecommand\transparent[1]{%
    \errmessage{(Inkscape) Transparency is used (non-zero) for the text in Inkscape, but the package 'transparent.sty' is not loaded}%
    \renewcommand\transparent[1]{}%
  }%
  \providecommand\rotatebox[2]{#2}%
  \newcommand*\fsize{\dimexpr\f@size pt\relax}%
  \newcommand*\lineheight[1]{\fontsize{\fsize}{#1\fsize}\selectfont}%
  \ifx\svgwidth\undefined%
    \setlength{\unitlength}{947.39803299bp}%
    \ifx\svgscale\undefined%
      \relax%
    \else%
      \setlength{\unitlength}{\unitlength * \real{\svgscale}}%
    \fi%
  \else%
    \setlength{\unitlength}{\svgwidth}%
  \fi%
  \global\let\svgwidth\undefined%
  \global\let\svgscale\undefined%
  \makeatother%
  \begin{picture}(1,0.69946263)%
    \lineheight{1}%
    \setlength\tabcolsep{0pt}%
    \put(0,0){\includegraphics[width=\unitlength,page=1]{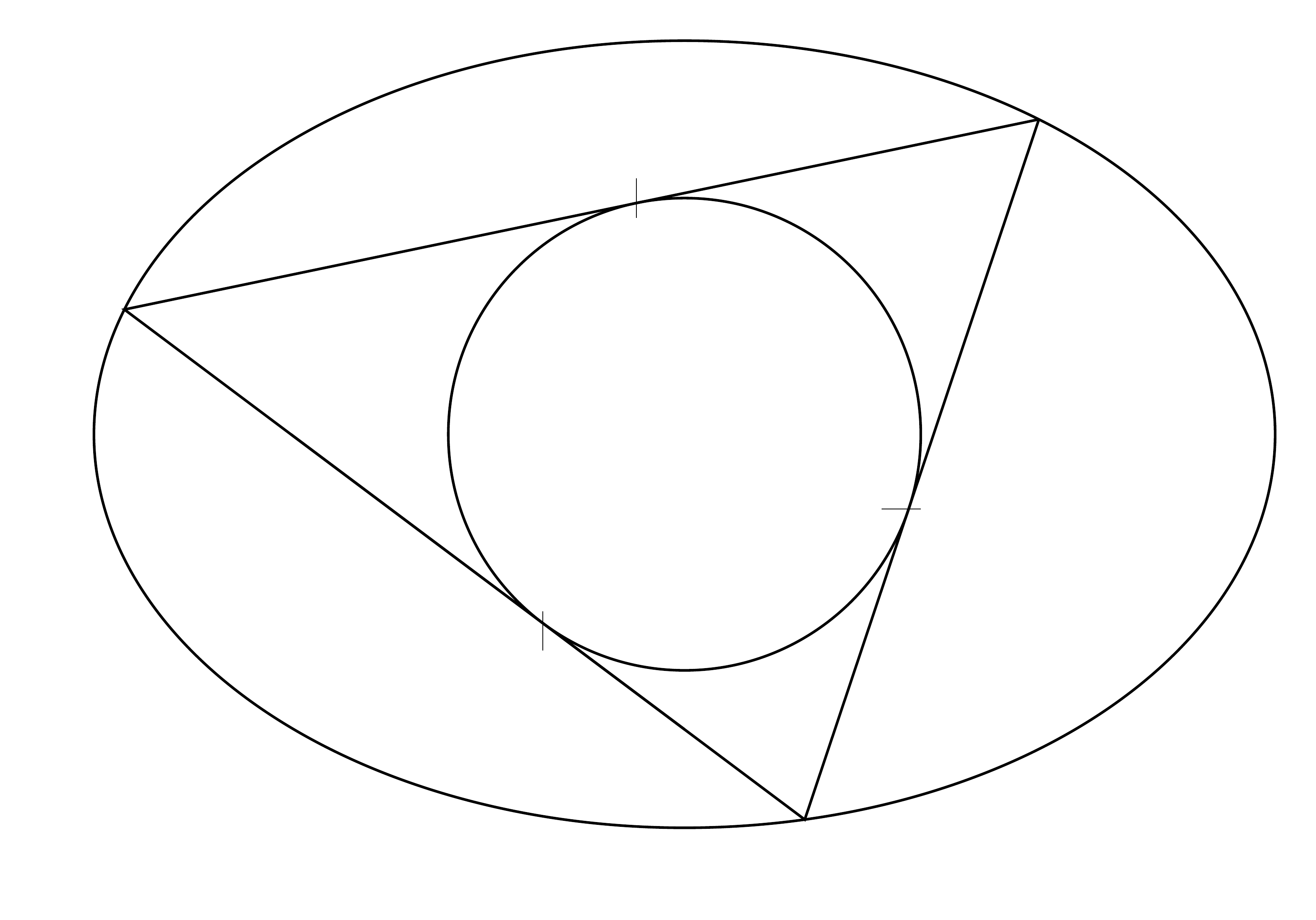}}%
    \put(0.80151326,0.62366032){\color[rgb]{0,0,0}\makebox(0,0)[lt]{\lineheight{1.25}\smash{\begin{tabular}[t]{l}$v_1$\end{tabular}}}}%
    \put(0.02949901,0.46806055){\color[rgb]{0,0,0}\makebox(0,0)[lt]{\lineheight{1.25}\smash{\begin{tabular}[t]{l}$v_2$\end{tabular}}}}%
    \put(0.35566007,0.57877578){\color[rgb]{0,0,0}\makebox(0,0)[lt]{\lineheight{1.25}\smash{\begin{tabular}[t]{l}$w_3 = -Av_3$\end{tabular}}}}%
    \put(0.72072107,0.30647618){\color[rgb]{0,0,0}\makebox(0,0)[lt]{\lineheight{1.25}\smash{\begin{tabular}[t]{l}$w_2 = -Av_2$\end{tabular}}}}%
    \put(0.62197506,0.03118427){\color[rgb]{0,0,0}\makebox(0,0)[lt]{\lineheight{1.25}\smash{\begin{tabular}[t]{l}$v_3$\end{tabular}}}}%
    \put(0.22100642,0.17780714){\color[rgb]{0,0,0}\makebox(0,0)[lt]{\lineheight{1.25}\smash{\begin{tabular}[t]{l}$w_1 = -Av_1$\end{tabular}}}}%
  \end{picture}%
\endgroup%
}
\caption{A self-dual triangle}
\label{fig:selfdual}
\end{figure}

\begin{remark}
\label{remark:thintetrahedron}
Recall that in Remark  \ref{remark:tighttetrahedron},
$S$ has vertices $(\pm ax,\pm by,\pm cz)$
with an even number of negative signs, where $(x,y,z) \in \Ein = \Ss^2$.
In the notation of Proposition~\ref{prop:tracebound},
the tetrahedron $S$ is tight if and only if $t_1 = 1$.
In this case, the dual $\tilde S$ has vertices
\[ \tilde v_i =
\left( \pm \frac{1}{x}, \pm \frac{1}{y}, \pm \frac{1}{z} \right), \]
again with an even number of negative signs.
Thus, $S$ is self-dual precisely when $\tr A = 1$,
consistently with Corollary~\ref{coro:selfdual} below.
\end{remark}

\begin{coro}
\label{coro:selfdual}
If $\tr A = 1$ then
any simplex $S$ which fits between $\Ein$ and $\Eout$ is self-dual.
\end{coro}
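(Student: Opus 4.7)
The plan is to derive the corollary directly from the equality analysis already carried out in the proof of Proposition~\ref{prop:tracebound}. Recall that by that proposition, the hypothesis $\tr A = 1$ forces $\Sfinout = \Stinout$, so any fitting simplex $S$ is automatically tight, and we may speak of its vertices $(v_i)$ and its points of tangency $(w_i)$ with $\Ein$.

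The first step is to extract the equality case from the trace computation. The proof of Proposition~\ref{prop:tracebound} showed that $\tr A_- = \tr A - 1$ equals a sum of nonnegative terms $(\langle w_i, Av_i\rangle + t_i)/(-t_i)$ with $t_i \ge 1$. When $\tr A = 1$, each numerator must vanish, giving $\langle w_i, Av_i\rangle = -t_i$; combined with $|w_i| = 1$, $|Av_i| \le 1$, Cauchy--Schwarz forces both $t_i = 1$ and the relation
\[
Av_i = -w_i \qquad \text{for every } i.
\]

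The second step is to plug this relation into the definition of the dual. By definition, the dual simplex $\tilde S$ has vertices $\tilde v_i = -A^{-1} w_i$. Using $w_i = -Av_i$, we obtain $\tilde v_i = -A^{-1}(-Av_i) = v_i$, so that $\tilde S$ and $S$ have the same labeled vertices, i.e.\ $\tilde S = S$. One checks identically that $\tilde w_i = -A v_i = w_i$, confirming the self-duality at the level of tangency points as well.

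There is essentially no obstacle here: all of the genuine work is already performed inside Proposition~\ref{prop:tracebound}, where the Cauchy--Schwarz equality case pins down the geometry. The only thing worth stating carefully is that the corollary is an immediate reading of the equality case, rather than a separate argument; this also fits with the explicit three-dimensional example of Remark~\ref{remark:thintetrahedron}, where self-duality was observed to occur exactly at $\tr A = 1$.
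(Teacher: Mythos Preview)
Your proof is correct and follows the same approach as the paper: the paper's own proof is a two-line pointer back to the equality analysis in the last paragraph of Proposition~\ref{prop:tracebound}, noting that $\tr A = 1$ forces $w_i = -Av_i = \tilde w_i$, which is exactly the content you have written out in more detail (you use the equivalent formulation $\tilde v_i = -A^{-1}w_i = v_i$).
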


\begin{proof}
This is the content of the last paragraph of the proof of
Proposition \ref{prop:tracebound}:
if $\tr A = 1$ then $w_i = -A v_i = \tilde w_i$ (for all $i$).
\end{proof}

\medskip

Given a tight simplex $S \in \Stinout$, its  vertices $v_i$
must satisfy $\| A v_i \| = 1$.
From Corollary \ref{coro:selfdual},
the hyperfaces of $S$ are tangent to $\Ein$ at $w_i = - A v_i$,
so that $\langle - A v_i , v_j \rangle = 1$, for $i \ne j$.
We embed $\RR^n$ into $\RR^{n+1}$ with an extra final coordinate:
given $v \in \RR^n$, we \textit{lift} it to obtain $\hat v = (v,1)$.
We denote the lifted hyperplane by $\hat \RR^n \subset \RR^{n+1}$.
In particular, we have vectors $\hat v_i = (v_i, 1) \in \hat\RR^{n}$
in the lifted ellipsoid $\hEout = \Eout \times \{1\} \subset \hat\RR^n$.
Set $\hat A = A \oplus (+1)$,
similar to $A_{-}$ in the proof of Proposition \ref{prop:tracebound},
but with $(\hat A)_{n+1,n+1} = +1$.
The symmetric positive definite matrix $\hat A$
induces an inner product $\llangle \cdot, \cdot \rrangle$ in $\RR^{n+1}$.
The lifted vectors $\hat v_i$ ($1 \le i \le n+1$)
form an orthogonal basis:
indeed, for $i \ne j$ we have
\[ \llangle \hat v_i , \hat v_j \rrangle =
\langle A v_i ,  v_j \rangle + 1 = 0. \]
The following result is a reformulation 
of parts of Proposition \ref{prop:tracebound}.

\begin{coro}
\label{coro:tracebound}
Assume that
$(\hat v_i)_{1 \le i \le n+1}$
is an orthogonal basis
of $\RR^{n+1}$
under $\llangle,\rrangle$.
If $\hat v_i \in \hEout$ for all $i$ then $\tr A = 1$.
If $\hat v_i \in \hEout$ for all $i \le n$
and $\hat v_{n+1} \in \hull(\hEout) \smallsetminus \hEout$
then $\tr A < 1$.
\end{coro}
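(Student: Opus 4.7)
The plan is to repeat the trace computation underlying Proposition~\ref{prop:tracebound}, but exploit the $\llangle \cdot, \cdot\rrangle$-orthogonality of $(\hat v_i)$ directly rather than constructing auxiliary normals $w_i$ and scalars $t_i$. Since $(\hat v_i)$ is a basis of $\RR^{n+1}$ orthogonal with respect to $\llangle,\rrangle$, a general linear operator $X$ on $\RR^{n+1}$ admits the diagonal expansion
\[ \tr X \;=\; \sum_{i=1}^{n+1} \frac{\llangle \hat v_i, X \hat v_i \rrangle}{\llangle \hat v_i, \hat v_i \rrangle}, \]
obtained by reading off the $(i,i)$-entry of $X$ in that basis.

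I would then apply this formula to $A_{-} = A \oplus (-1)$, the same operator used in the proof of Proposition~\ref{prop:tracebound}, for which $\tr A_{-} = \tr A - 1$. Since $A_{-} \hat v_i = (A v_i, -1)$, a direct calculation gives
\[ \llangle \hat v_i, A_{-} \hat v_i \rrangle \;=\; \langle \hat A \hat v_i, A_{-} \hat v_i\rangle \;=\; \|A v_i\|^2 - 1, \qquad \llangle \hat v_i, \hat v_i \rrangle \;=\; \langle A v_i, v_i\rangle + 1. \]
Each denominator is strictly positive because $\hat A$ is positive definite, so the sign of each summand is controlled entirely by the numerator $\|A v_i\|^2 - 1$.

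Both statements of the corollary then fall out by inspection. If $\hat v_i \in \hEout$ for every $i$, the condition $v_i \in \Eout$ yields $\|A v_i\| = 1$ throughout, every numerator vanishes, and we conclude $\tr A = 1$. If instead $\hat v_i \in \hEout$ only for $i \le n$ while $\hat v_{n+1} \in \hull(\hEout) \smallsetminus \hEout$, then the numerators for $i \le n$ still vanish, whereas $v_{n+1}$ lies strictly inside $\hull(\Eout)$ and therefore $\|A v_{n+1}\| < 1$, producing a strictly negative numerator at $i = n+1$. Positivity of the denominator then forces $\tr A - 1 < 0$.

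I do not foresee any serious obstacle. The only real choice is recognising that it is $A_{-}$, rather than $\hat A$ itself, whose trace expansion has numerators factoring as $\|A v_i\|^2 - 1$, a quantity that exactly encodes the dichotomy $v_i \in \Eout$ versus $v_i \in \hull(\Eout) \smallsetminus \Eout$. Once this operator is singled out, the remaining arithmetic is immediate.
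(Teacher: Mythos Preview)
Your proof is correct and follows essentially the same route as the paper: both expand $\tr A_{-}$ in the $\llangle\cdot,\cdot\rrangle$-orthogonal basis $(\hat v_i)$ and observe that each diagonal contribution equals $(\|Av_i\|^2-1)/(\langle Av_i,v_i\rangle+1)$, so the sign is governed by whether $v_i$ lies on or strictly inside $\Eout$. The paper simply phrases the numerator computation via $w_i=-Av_i$ and $\hat w_i$, but the content is identical.
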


\begin{proof}
As in the proof of Proposition \ref{prop:tracebound},
write $\hat v_i = (v_i,1) \in \hEout$, $v_i \in \Eout$,
$w_i = -Av_i \in \Ein$ and $\hat w_i = (w_i,1)$.
Take $A_{-} = A \oplus (-1)$ so that $A_{-}\hat v_i = -\hat w_i$.
If $v_i \in \Eout$, we have $\langle v_i, A^2 v_i \rangle = 1$
and therefore
$\llangle \hat v_i, A_{-}\hat v_i \rrangle =
- \llangle \hat v_i, \hat w_i \rrangle = 0$.
Similarly, if $v_i \in \hull(\Eout) \smallsetminus \Eout$ then
$\llangle \hat v_i, A_{-}\hat v_i \rrangle < 0$.
Thus, in the first scenario,
if $A_{-}$ is written in the basis 
$(\hat v_i)_{1 \le i \le n+1}$ of $\RR^{n+1}$
then its diagonal entries are equal to $0$
and therefore $\tr(A_{-}) = \tr(A) - 1 = 0$.
In the second scenario, all diagonal entries are nonpositive
and at least one of them is negative
and therefore $\tr(A_{-}) = \tr(A) - 1 < 0$.
\end{proof}

\begin{lemma}
\label{lemma:vstar}
Let $(v_i)_{i \le n}$ be a basis of $\RR^n$
consisting of vectors $v_i \in \Eout$.
Assume furthermore that the vectors $\hat v_i = (v_i,1) \in \RR^{n+1}$
are orthogonal with respect to $\llangle\cdot,\cdot\rrangle$.
Then there exists a unique vector $v_{\star} \in \RR^n$
such that, for all $i \le n$,
$\llangle \hat v_{\star}, \hat v_i \rrangle = 0$
(where $\hat v_{\star} = (v_{\star},1)$).
Furthermore, $v_{\star} \ne 0$.
\end{lemma}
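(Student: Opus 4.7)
The plan is to use the nondegeneracy of $\llangle\cdot,\cdot\rrangle$ on $\RR^{n+1}$ (which follows from $\hat A$ being positive definite) together with the fact that $(\hat v_1,\ldots,\hat v_n)$ spans an $n$-dimensional subspace $U \subset \RR^{n+1}$. Its orthogonal complement $U^\perp$ with respect to $\llangle\cdot,\cdot\rrangle$ is then a line. The existence and uniqueness of $v_\star$ amount to showing that this line meets the affine hyperplane $\hat\RR^n = \RR^n \times \{1\}$ transversally in exactly one point; equivalently, that $U^\perp$ is not contained in the coordinate hyperplane $\RR^n \times \{0\}$.

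First I would verify this transversality. Take $(u,0) \in \RR^{n+1}$ and compute
\[ \llangle (u,0), \hat v_i \rrangle = \langle (u,0), \hat A(v_i,1) \rangle = \langle (u,0), (Av_i,1)\rangle = \langle u, Av_i\rangle = \langle Au, v_i\rangle. \]
If $(u,0) \in U^\perp$, then $\langle Au, v_i\rangle = 0$ for every $i\le n$, and since $(v_i)$ is a basis of $\RR^n$, we conclude $Au = 0$. Because $A$ is positive definite, and in particular invertible, this forces $u = 0$. Hence the projection $U^\perp \to \RR$ onto the last coordinate is injective, so there is a unique nonzero vector $\hat v_\star \in U^\perp$ whose last entry equals $1$; writing $\hat v_\star = (v_\star,1)$ produces the required $v_\star \in \RR^n$.

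It remains to rule out $v_\star = 0$. If $v_\star = 0$, then $\hat v_\star = e_{n+1}$, and for every $i$,
\[ \llangle e_{n+1}, \hat v_i \rrangle = \langle e_{n+1}, \hat A(v_i,1)\rangle = \langle e_{n+1}, (Av_i,1)\rangle = 1, \]
contradicting the orthogonality $\llangle \hat v_\star, \hat v_i\rrangle = 0$. Therefore $v_\star \ne 0$.

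The argument is essentially routine linear algebra; the only real point to keep track of is the dichotomy between the genuine orthogonal complement in $\RR^{n+1}$ (a line) and its intersection with the affine slice $\hat\RR^n$ (a single point), which is exactly what the invertibility of $A$ together with the basis hypothesis on $(v_i)$ guarantees. No compactness, positivity, or $\tr A = 1$ hypothesis is needed for this lemma.
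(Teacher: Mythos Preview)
Your proof is correct and follows essentially the same route as the paper's: both take the $\llangle\cdot,\cdot\rrangle$-orthogonal complement of the span of the $\hat v_i$, observe it is a line, rule out the case where the generator has last coordinate $0$ via invertibility of $A$ and the basis hypothesis, and rule out $v_\star=0$ by computing $\llangle e_{n+1},\hat v_i\rrangle=1$. The only differences are notational.
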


\begin{proof}
The family $(\hat v_i)_{i \le n}$ is a basis
of a subspace $X \subset \RR^{n+1}$ of codimension $1$.
Thus, the subspace $X^\bot \subset \RR^{n+1}$,
the orthogonal complement of $X$ under $\llangle\cdot,\cdot\rrangle$,
is a line.
Let $\tilde v$ be a generator of $X^\bot$, and write $\tilde v=(v_\star,c)$ where $v_\star\in\RR^n$ and $c\in\RR$.
If $c=0$ we have
$0 = \llangle \tilde v, \hat v_i \rrangle = \langle A\tilde v, v_i \rangle$
for all $i \le n$ and therefore $A\tilde v = 0$, a contradiction.
If $\tilde v = (0,c)$ then
$0 = \llangle (0,c), \hat v_i \rrangle = c$, a contradiction.
We may therefore assume without loss of generality that
$\tilde v = (v_\star,1)$ with $v_\star\neq 0$, completing the proof.
\end{proof}

The following result is a kind of converse of Corollary \ref{coro:tracebound}.

\begin{lemma}
\label{lemma:traceboundconverse}
Let $(v_i)_{i \le n}$ be a basis of $\RR^n$
and $v_{\star} \in \RR^n$
as in Lemma \ref{lemma:vstar}.
Then  $\tr A = |Av_{\star}|^2$.
In particular,
if $\tr(A) < 1$ then $v_{\star} \in \hull(\Eout) \smallsetminus \Eout$;
if $\tr(A) = 1$ then $v_{\star} \in \Eout$;
if $\tr(A) > 1$ then $v_{\star} \notin \Eout$.
\end{lemma}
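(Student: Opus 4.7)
My plan is to realise $v_{\star}$ as the ``missing vertex'' of an explicit simplex $S \subset \RR^{n}$ and then to apply the trace identity from the proof of Proposition~\ref{prop:tracebound}. Let $S$ have vertices $v_{1},\ldots,v_{n},v_{n+1}:=v_{\star}$. The hypotheses $|Av_{i}|=1$ for $i\le n$ and $\langle Av_{i},v_{j}\rangle = -1$ for every pair $i\ne j$ in $\{1,\ldots,n,\star\}$ (the latter being the $\llangle\cdot,\cdot\rrangle$-orthogonality among $\hat v_{1},\ldots,\hat v_{n},\hat v_{\star}$) imply that for each $i\le n$ the hyperplane $\{x\in\RR^{n} : \langle x,-Av_{i}\rangle=1\}$ contains every $v_{j}$ with $j\ne i$ and is tangent to $\Ein$ at $-Av_{i}$. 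Thus the face of $S$ opposite $v_{i}$ has outward unit normal $w_{i} = -Av_{i}$ and lies at distance $t_{i} = 1$ from the origin. For the remaining face, opposite $v_{\star}$ and passing through $v_{1},\ldots,v_{n}$, a direct calculation yields the unit normal $w_{n+1} = -Av_{\star}/|Av_{\star}|$ and the distance $t_{n+1}=1/|Av_{\star}|$.

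I would then invoke the trace identity derived in the proof of Proposition~\ref{prop:tracebound}: for the biorthogonal bases $\hat v_{i}=(v_{i},1)$ and $\hat w_{i}=(w_{i},-t_{i})$ of $\RR^{n+1}$, and with $A_{-}=A\oplus(-1)$,
\[
\tr A - 1 \; = \; \tr A_{-} \; = \; \sum_{i=1}^{n+1}\frac{\langle w_{i},Av_{i}\rangle + t_{i}}{-t_{i}}.
\]
Each $i\le n$ contributes zero, since $\langle w_{i},Av_{i}\rangle + t_{i} = -|Av_{i}|^{2} + 1 = 0$. The surviving $(n+1)$-term, after substituting the expressions for $w_{n+1}$ and $t_{n+1}$, simplifies to
\[
\frac{(1-|Av_{\star}|^{2})/|Av_{\star}|}{-1/|Av_{\star}|} \; = \; |Av_{\star}|^{2}-1,
\]
so $\tr A - 1 = |Av_{\star}|^{2} - 1$, giving the desired identity $\tr A = |Av_{\star}|^{2}$. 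The ``in particular'' trichotomy is then immediate from comparing $|Av_{\star}|^{2}$ to $1$, since $v\in\Eout$ (respectively $v\in\hull(\Eout)\smallsetminus\Eout$, or $v\notin\hull(\Eout)$) is characterised by $|Av|^{2}=1$ (respectively $<1$, or $>1$).

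The main obstacle I foresee is applying the trace identity outside the regime of Proposition~\ref{prop:tracebound}: here $v_{\star}$ is allowed to lie outside $\hull(\Eout)$ and the simplex $S$ is then not a fitting simplex in the sense of that proposition. However, the identity displayed above is a purely algebraic consequence of the biorthogonality of the extended bases $\hat v_{i}$ and $\hat w_{j}$ in $\RR^{n+1}$, i.e.\ of the supporting-hyperplane relations $\langle v_{i},w_{j}\rangle = t_{j}$ for $j\ne i$; these relations hold by construction of $S$ regardless of the location of $v_{\star}$ relative to $\Eout$. So I would isolate that algebraic portion of Proposition~\ref{prop:tracebound}'s proof and apply it directly here, thereby covering all three cases of $\tr A$ simultaneously.
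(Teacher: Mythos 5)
Your argument mirrors the paper's own proof step by step, and unfortunately it inherits the same flaw: the trace identity you invoke,
\[
\tr A - 1 \;=\; \sum_i \frac{\langle w_i, Av_i\rangle + t_i}{-t_i},
\]
is not correct as written. The diagonal matrix $D = \hat W^T \hat V$ has entries $D_{ii} = \langle \hat v_i, \hat w_i\rangle = \langle v_i, w_i\rangle - t_i$, which is \emph{not} $-t_i$ (indeed the proof of Proposition~\ref{prop:tracebound} itself records $\langle \hat v_i, \hat w_i\rangle < -t_i$ strictly, just before asserting $D = \diag(-t_1,\ldots,-t_{n+1})$). The correct form is $\tr X = \sum_i \langle \hat w_i, X\hat v_i\rangle / \langle \hat v_i, \hat w_i\rangle$. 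With your choice of $w_i$, $t_i$ and $v_{n+1}=v_{\star}$ one has $\langle \hat v_i, \hat w_i\rangle = -(\langle A v_i, v_i\rangle + 1)$ for $i\le n$ and $\langle \hat v_{n+1},\hat w_{n+1}\rangle = -t_{n+1}(\langle Av_{\star},v_{\star}\rangle+1)$, so the terms $i\le n$ still vanish but the surviving term gives
\[
\tr A - 1 \;=\; \frac{|Av_{\star}|^2 - 1}{\langle Av_{\star}, v_{\star}\rangle + 1},
\]
not $|Av_{\star}|^2 - 1$. The exact identity $\tr A = |Av_{\star}|^2$ in the statement is in fact false in general: for $n=2$, $A = \tfrac13 I$, $v_1=(3,0)$, $v_2=(-1,2\sqrt2)$ one gets $v_{\star}=(-1,-\sqrt2)$, $\tr A = \tfrac23$ yet $|Av_{\star}|^2 = \tfrac13$. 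So the step where you divide the $(n+1)$-st numerator by $-t_{n+1} = -1/|Av_{\star}|$ silently drops the factor $\langle Av_{\star},v_{\star}\rangle + 1$.

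The ``in particular'' trichotomy is nevertheless true, and it is all that Lemma~\ref{lemma:vn+1} and the proof of Theorem~\ref{theo:euler} actually use. Since $A$ is positive definite and $v_{\star}\neq 0$ by Lemma~\ref{lemma:vstar}, the denominator $\langle Av_{\star},v_{\star}\rangle + 1$ is strictly positive, so $\tr A - 1$ and $|Av_{\star}|^2 - 1$ have the same sign; this gives exactly the three cases of the lemma. To repair your argument, replace every $-t_i$ in the denominators with $\langle \hat v_i, \hat w_i\rangle$ and conclude by comparing signs rather than by extracting the exact (false) identity.
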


\begin{proof}
Set $v_{n+1} = v_{\star}$.
For $i \le n+1$, set $\hat v_i = (v_i,1)$
so that $(\hat v_i)_{i \le n+1}$ is an orthogonal basis.
For $i \le n$, set $w_i = -Av_i \in \Ein$ and $t_i = 1$.
Set $t_{n+1} = 1/|Av_{n+1}|$ and $w_{n+1} = - t_{n+1} Av_{n+1}$
so that $w_{n+1} \in \Ein$ and $t_{n+1} > 0$.
For $i \le n+1$, define the hyperplane
\[ H_i = \{ q \in \RR^n \;|\; \langle q, w_i \rangle = t_i \}. \]
From orthogonality, $i \ne j$ implies $v_i \in H_j$.
Let $A_{-} = A \oplus (-1)$ be as
in the proof of Proposition \ref{prop:tracebound}.
Equation \eqref{equation:traceaminus} still holds
(with the same proof) and gives us
\[ \tr A - 1 = \sum_i \frac{ \langle w_i,  A  v_i \rangle + t_i}{(-t_i)}. \]
For $i \le n$ we have $\langle w_i,  A  v_i \rangle + t_i = -1 +1 = 0$.
For $i = n+1$ we have $\langle w_i,  A  v_i \rangle + t_i =
t_{n+1} (1-\langle Av_{n+1}, Av_{n+1}\rangle)$.
We therefore have
$\tr A - 1 = \langle Av_{n+1}, Av_{n+1}\rangle - 1$,
or, equivalently, $\tr A = |Av_{\star}|^2$.
\end{proof}

The heart of the proof of Theorem \ref{theo:euler}
is the construction of $\phi^{-1}: O(n) \to \Sfinout$
by a process similar to Gram-Schmidt 
which we call {\it adjusted orthogonalization}.
We first describe this procedure,
leaving the verification of certain technical aspects
for Lemmas \ref{lemma:monotono},
\ref{lemma:project} and \ref{lemma:vn+1}.

Given an orthogonal matrix $Q \in O(n)$ with columns $u_i$,
we obtain $v_i$ (for $i \le n$)
by performing {\it adjusted orthogonalization}
on the lifted vectors $\hat u_i = (u_i,1)$.
The procedure is illustrated in Figure~\ref{fig:8ellipses}
and is described below.
In a nutshell, we start with a family $(u_i)$ of vectors
in $\Ein \subset \RR^n$,
obtain $(\hat u_i)$ with $\hat u_i = (u_i,1) \in \RR^{n+1}$,
apply the procedure to define $(\hat v_i)$
and finally project back,
yielding the family $(v_i)$ with $\hat v_i = (v_i,1)$
and $v_i \in \Eout \subset \RR^n$.
There will be a final extra step to obtain $v_{n+1} \in \RR^n$,
in a similar but different manner.

\begin{figure}[ht]
\def\svgwidth{130mm}
\centerline{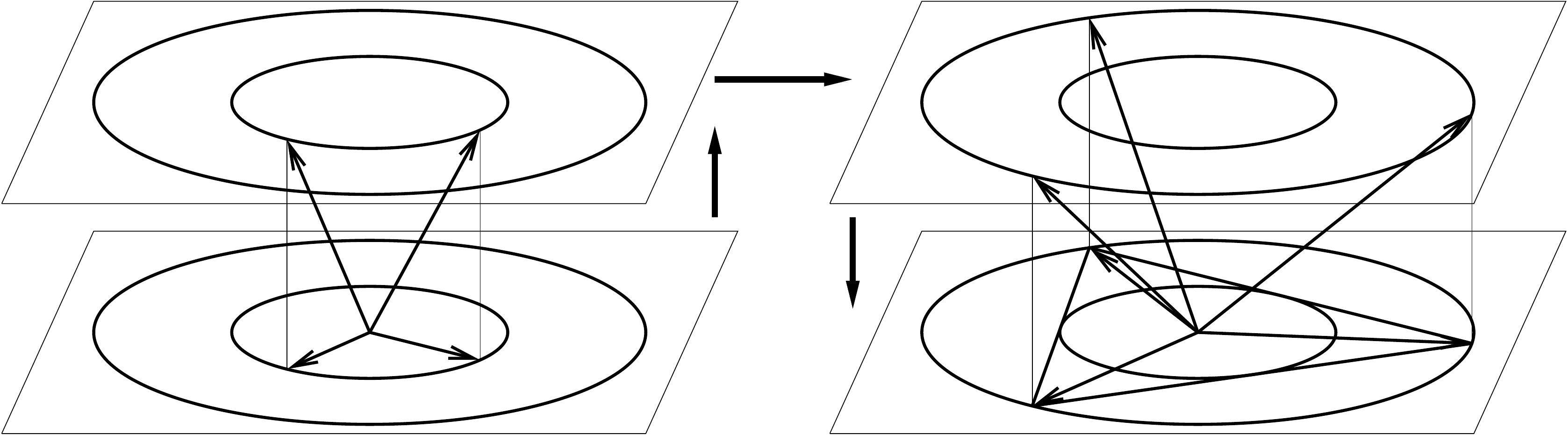}
\caption{Adjusted orthogonalization of a basis $(u_i)$
obtains vectors $(v_i)$.
In this example, $\tr A = 1$, $v_{n+1} \in \Eout$
and the simplex $S$ is tight. }
\label{fig:8ellipses}
\end{figure}


By construction, 
the family $(\hat v_j)_{1 \le j \le n}$
obtained by adjusted orthogonalization from $(\hat u_j)$ 
is orthogonal with respect to $\llangle \cdot, \cdot \rrangle$
and consists of vectors $\hat v_j \in \hEout$.
For $\hat v_j = (v_j,1)$,
the basis $(v_j)$ induces the same flag as $(u_j)$:
for any $i \le n$,
the two families $(u_j)_{j \le i}$ and $(v_j)_{j \le i}$
span the same subspace of dimension $i$ of $\RR^n$.


Set
$v_1 = u_1/ \| A u_1 \| \in \Eout$ and
$\hat v_1 = ( v_1, 1 ) \in \hEout$.
Given $1 < j < n$ and $\hat v_1, \ldots, \hat v_{j-1} \in \hEout$,
we show how to obtain $v_{j} \in \Eout \subset \RR^n$
and
$\hat v_{j} = (v_{j},1) \in \hEout$.
As in the usual Gram-Schmidt process,
we want $v_{j}$ to belong to the space spanned by
$u_1, \ldots, u_{j}$ or, equivalently, by
$v_1, \ldots, v_{j-1}, u_{j}$.
We also want the orthogonality requirement
$\llangle \hat v_{j}, \hat v_k \rrangle = 0$
(for $k < j$):
this translates into $v_{j}$ belonging to
an affine subspace of codimension $j-1$.
Thus, if we are in general position
(as we shall prove to be),
the two conditions yield that $\hat v_{j} = (v_{j},1)$
belongs to a line in $\hat\RR^{n+1}$.
As we shall prove in Lemma~\ref{lemma:vn+1},
this line intersects $\hEout$ in two distinct points.
The vector $\hat v_{j}$ is chosen to be the intersection point
for which
$v_{j} = p u_{j} - \sum_{k<j} c_k v_k$ with $p > 0$.
For reference, we itemize the crucial properties of $\hat v_{j}$:
\begin{enumerate}
\item[(0)] $\hat v_{j} = (v_{j}, 1)$,
\item[(1)] $v_{j} = p u_{j} - \sum_{k<j} c_k v_k$, $p > 0$,
\item[(2)] $\llangle \hat v_{j}, \hat v_k \rrangle = 0$, $k < j$,
\item[(3)] $\hat v_{j} \in \hEout$, or, equivalently, $\| A v_{j} \| = 1$.
\end{enumerate}

After completing the procedure for $j \le n$,
there is an extra step $j = n+1$ to obtain
the vectors $v_{n+1}$ and $\hat v_{n+1}$.
Notice first that the subspace spanned by $v_1, \ldots, v_n$ is $\RR^n$,
thus imposing no restriction on $v_{n+1}$.
Assuming general position, the orthogonality requirements
$\llangle \hat v_{j}, \hat v_k \rrangle = 0$
(for $k \le n$)
yield a single point $\hat v_{n+1}$
in the hyperplane $\hat\RR^n \subset \RR^{n+1}$.
As we shall see, if $\tr A = 1$ then $\hat v_{n+1} \in \hEout$.
On the other hand, if $\tr A < 1$ then $\hat v_{n+1}$
is in the interior of $\hEout$.

If $\tr A = 1$, the construction
obtains the vertex family $(v_j)_{j \le n+1}$ of a tight simplex $S$
associated with
the orthogonal matrix $Q$ with columns
$(u_j)_{j \le n}$.
The correspondence provides the inverse of $\phi: \Sinout \to O(n)$,
as the flags associated with the two bases
$(u_j)_{j \le n}$ and $(v_j)_{j \le n}$ coincide.

For $\tr A < 1$,
the same construction gives rise to a fitting simplex $S$
for which all hyperfaces are tangent to $\Ein$
but the last vertex $v_{n+1}$ is strictly inside $\Eout$:
the simplex is not tight.
This however suffices to derive the surjectivity of
the map $\phi: \Sinout \to O(n)$.

\medskip

We now provide details.
First, we describe adjusted orthogonalization in terms of projected vectors.
Clearly, a linearly independent  family 
$(u_j)_{j \le i}$, $u_j \in \RR^n$, lifts to a linearly independent family 
$(\hat u_j)_{j \le i}$.

For a subspace $W \subset \RR^n$,
consider the ellipsoids $\Ein \cap W$ and $\Eout \cap W$.
As above, there exists a unique real symmetric positive
linear transformation $A_W: W \to W$ with
$A_W[\Eout\cap W] = \Ein \cap W$.
If $X$ is a real symmetric matrix, we denote its spectrum by $\sigma(X)$,
which is to be interpreted as a finite multiset of real numbers.

\begin{lemma}
\label{lemma:monotono}
If $W_1 \subsetneqq W_2$ then
$\tr(A_{W_1}) < \tr(A_{W_2})$.
\end{lemma}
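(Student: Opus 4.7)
The plan is to reduce the inequality to the Cauchy interlacing theorem applied to compressions of $A^2$. The preparatory step is to identify $A_W$ concretely. For a subspace $W \subseteq \RR^n$, $\Ein \cap W$ is the unit sphere of $W$ (in the inherited inner product), while $\Eout \cap W = \{v \in W : \langle A^2 v, v \rangle = 1\}$. The quadratic form $v \mapsto \langle A^2 v, v \rangle$ restricted to $W$ is represented by the positive definite operator $B_W := P_W A^2|_W$ on $W$, where $P_W$ denotes orthogonal projection onto $W$. Since $B_W^{1/2}$ carries $\Eout \cap W$ onto $\Ein \cap W$, uniqueness of the positive symmetric square root forces $A_W = B_W^{1/2}$, so the eigenvalues of $A_W$ are precisely the positive square roots of the eigenvalues of the compression $B_W$.

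Next I would reduce to the codimension-one case $\dim W_2 = \dim W_1 + 1$ by inserting a chain of subspaces between $W_1$ and $W_2$, each one codimension one in the next. For the codimension-one step, choose an orthonormal basis of $W_2$ whose first $\dim W_1$ vectors span $W_1$; then the matrix of $B_{W_1}$ appears as the leading principal submatrix of the matrix of $B_{W_2}$, and Cauchy interlacing applies. Writing $\lambda_1 \ge \cdots \ge \lambda_m$ and $\mu_1 \ge \cdots \ge \mu_{m-1}$ for the respective eigenvalues of $B_{W_2}$ and $B_{W_1}$ (with $m = \dim W_2$), interlacing yields $\lambda_i \ge \mu_i \ge \lambda_{i+1}$ for $1 \le i \le m-1$. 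All these eigenvalues are strictly positive since $A^2$ is positive definite and a compression of a positive definite operator remains positive definite.

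The conclusion then follows from monotonicity of the square root:
\[
\tr(A_{W_2}) - \tr(A_{W_1}) = \sqrt{\lambda_m} + \sum_{i=1}^{m-1}\bigl(\sqrt{\lambda_i} - \sqrt{\mu_i}\bigr) \ge \sqrt{\lambda_m} > 0.
\]
The main obstacle is the initial identification $A_W^2 = P_W A^2 |_W$, which requires some care in distinguishing the intrinsic operator $A_W$ on $W$ from the restriction of $A$ to $W$ (the two are different in general, because the restriction need not send $W$ to itself). Once that identification is made, Cauchy interlacing together with strict positive definiteness delivers the strict inequality, with the extra term $\sqrt{\lambda_m}$ ensuring strictness.
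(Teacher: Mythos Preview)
Your proof is correct and follows essentially the same route as the paper: reduce to the codimension-one case and invoke Cauchy interlacing, then read off the strict trace inequality from positivity. The paper's proof is a two-line sketch that simply asserts the spectra of $A_{W_1}$ and $A_{W_2}$ interlace; your identification $A_W^2 = P_W A^2|_W$ is exactly the missing justification behind that assertion, so your argument is a fleshed-out version of the paper's rather than a genuinely different approach.
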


\begin{proof}
It suffices to consider the case $\dim(W_2) = 1+\dim(W_1) = k$.
In this case, the spectra of $A_{W_1}$ and $A_{W_2}$ are interlaced
and therefore $\tr(A_{W_1}) < \tr(A_{W_2})$, as desired.
\end{proof}


\begin{remark}
\label{remark:monotono}
The subspace $W$ is not assumed to be invariant under $A$.
Indeed, if the spectrum of $A$ is simple
there are only finitely many invariant subspaces.
In particular, $A_W$ is not to be confused with 
the restriction $A|_W: W \to A[W]$.

On the other hand, $A$ and $A_W$ define the same inner products in $W$:
if $v_1, v_2 \in W$ then
$\langle Av_1, v_2 \rangle = \langle A_W v_1, v_2 \rangle$.
The same holds for $W \oplus \RR$:
we have two expressions but the same inner product
$\llangle\cdot , \cdot\rrangle$.
\end{remark}

\begin{lemma}
\label{lemma:project}
Assume $\tr A  \le 1$ and $i \le n$.
Consider a family 
$(\hat v_j)_{j \le i}$
of vectors in $\hEout$.
If this family is orthogonal
for the inner product $\llangle \cdot, \cdot\rrangle$
then it projects to a linearly independent family 
$(v_j)_{j \le i}$.
\end{lemma}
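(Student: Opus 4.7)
The plan is to argue by contradiction: assume the projection $(v_j)_{j \le i}$ is linearly dependent and extract a violation of the trace bound via Lemma~\ref{lemma:monotono} and Corollary~\ref{coro:tracebound}. Set $W = \operatorname{span}(v_1,\ldots,v_i) \subseteq \RR^n$ and let $d = \dim W$. Our goal is $d = i$, since the $v_j$ are independent exactly when the span they generate has dimension equal to their number.

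First I would pin down the two possible values of $d$. Because $\hat A$ is positive definite, $\llangle\cdot,\cdot\rrangle$ is an inner product, so the orthogonal family $(\hat v_j)_{j \le i}$ of nonzero vectors (each has last coordinate $1$) is linearly independent in $\RR^{n+1}$. All $\hat v_j$ lie in the subspace $W \oplus \RR \subset \RR^{n+1}$ (here I regard $W \oplus \RR$ as the lift of $W$), so $i \le \dim(W \oplus \RR) = d+1$. Combined with the trivial bound $d \le i$, this leaves only $d = i$ (what we want) or $d = i-1$.

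The heart of the argument is to rule out $d = i - 1$. In that case the $\hat v_j$ form an orthogonal basis of $W \oplus \RR$ with respect to $\llangle\cdot,\cdot\rrangle$. By Remark~\ref{remark:monotono}, this same inner product is induced on $W \oplus \RR$ by $\hat A_W = A_W \oplus (+1)$. Moreover, each $v_j \in W$ and $v_j \in \Eout$, so $v_j \in \Eout \cap W$, hence $\hat v_j$ lies in the lifted inner ellipsoid for the pair $(\Eout \cap W, \Ein \cap W)$. Corollary~\ref{coro:tracebound}, applied in the ambient subspace $W$ (of dimension $i-1$, with $i$ orthogonal lifted vectors all on the outer ellipsoid), then yields $\tr A_W = 1$.

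Finally I would derive the contradiction from monotonicity. Since $d = i - 1 \le n - 1$, we have $W \subsetneq \RR^n$, so Lemma~\ref{lemma:monotono} gives $\tr A_W < \tr A \le 1$, contradicting $\tr A_W = 1$. Hence $d = i$ and $(v_j)_{j \le i}$ is linearly independent. The main obstacle, or rather the only subtlety, is making sure that Corollary~\ref{coro:tracebound} really does transfer verbatim to the smaller ambient space $W$ in place of $\RR^n$; this is precisely what Remark~\ref{remark:monotono} guarantees, since $\llangle\cdot,\cdot\rrangle$ and the inner product built from $\hat A_W$ agree on $W \oplus \RR$, so the orthogonality hypothesis in the corollary transports without change.
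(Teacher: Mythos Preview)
Your proof is correct and follows essentially the same route as the paper: define $W$ as the span of the $v_j$, transfer the $\llangle\cdot,\cdot\rrangle$-orthogonality to $W \oplus \RR$ via Remark~\ref{remark:monotono}, apply Corollary~\ref{coro:tracebound} in the ambient space $W$ to force $\tr A_W = 1$, and contradict this with Lemma~\ref{lemma:monotono}. You are more explicit than the paper on two points the authors leave implicit: the dimension count showing that dependence forces $d = i-1$ (so the $\hat v_j$ actually form a full orthogonal basis of $W \oplus \RR$, as Corollary~\ref{coro:tracebound} requires), and the observation that $d = i-1 \le n-1$ guarantees $W \subsetneq \RR^n$. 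One small slip: you wrote ``lifted inner ellipsoid'' where you meant ``lifted outer ellipsoid'' (since $v_j \in \Eout \cap W$).
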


\begin{proof}
Let $W \subseteq \RR^n$ be the subspace generated by $(v_j)_{j \le i}$.
If this family is orthogonal
for the inner product $\llangle \cdot, \cdot\rrangle$
it follows from Remark \ref{remark:monotono}
that the family is also orthogonal in $W \oplus \RR$
under the inner product $\llangle \cdot, \cdot\rrangle_W$
defined by $A_W$.
If the family $V$ is not linearly independent,
apply Corollary \ref{coro:tracebound}
to $W$, to deduce that $\tr(A_W) = 1$.
Apply Lemma \ref{lemma:monotono} 
with $W_1 = W$ and $W_2 = \RR^n$
to deduce that $\tr(A) > 1$, a contradiction.
\end{proof}

\begin{lemma}
\label{lemma:vn+1}
Assume $\tr A \le 1$ and $i < n$.
Let $X_i \subset X_{i+1} \subseteq \RR^n$
be subspaces of dimensions $i$ and $i+1$.
Let $(v_j)_{j \le i}$ be a family of vectors
in $\Eout \cap X_i$ and $\hat v_j = (v_j,1)$.
Assume that $(\hat v_j)_{j \le i}$ is orthogonal
with respect to $\llangle \cdot,\cdot \rrangle$.
Then there exist precisely two vectors $v \in \Eout \cap X_{i+1}$
such that $\hat v = (v,1)$
is $\llangle\cdot,\cdot\rrangle$-orthogonal to all $\hat v_j$, $j \le i$.
For either vector $v$, 
taking $v_{i+1} = v$ yields a basis of $X_{i+1}$,
and the two bases have opposite orientations.
\end{lemma}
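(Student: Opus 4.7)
The plan is to lift to $\hat Y := X_{i+1} \oplus \RR \subset \RR^{n+1}$, equipped with the restriction of $\llangle\cdot,\cdot\rrangle$, and analyze the two-dimensional subspace $L \subset \hat Y$ defined as the $\llangle\cdot,\cdot\rrangle$-orthogonal complement of $V := \mathrm{span}(\hat v_j : j \le i)$. Vectors $v \in X_{i+1}$ with $(v,1)$ orthogonal to every $\hat v_j$ correspond exactly to the affine slice $\ell := L \cap (X_{i+1} \times \{1\})$. To show $\ell$ is a nonempty affine line, I note that the projection $\pi : L \to \RR$, $(w,c) \mapsto c$, is linear; if it vanished on $L$ then $L$ would sit inside $X_{i+1} \times \{0\}$, and the vectors $(u,0) \in L$ satisfy $\langle Au, v_j\rangle = 0$ for all $j$, carving out a subspace of dimension $(i+1) - i = 1$ (since $A$ is invertible and the $v_j$ are linearly independent), contradicting $\dim L = 2$. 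Hence $\pi$ is surjective and $\ell$ is an affine line; call its image in $X_{i+1}$ by $\ell'$.

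Next I would produce a basepoint $v_\star \in X_i \cap \ell'$ by running the argument of Lemma \ref{lemma:vstar} inside $X_i$ (using that the orthogonal complement of $V$ in $X_i \oplus \RR$ is one-dimensional and is not contained in $X_i \times \{0\}$), and pick $u_0 \in X_{i+1} \setminus X_i$ with $(u_0, 0) \in L$, so that $\ell' = v_\star + \RR u_0$. The intersection $\ell' \cap \Eout \cap X_{i+1}$ is the zero set of the quadratic $Q(t) := |A(v_\star + tu_0)|^2 - 1$, which has strictly positive leading coefficient $|Au_0|^2$; two distinct real roots therefore exist precisely when $Q(0) < 0$, i.e.\ $|Av_\star|^2 < 1$. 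The crux of the proof is to establish this strict inequality. My plan is to apply Lemma \ref{lemma:traceboundconverse} inside the ambient space $X_i$ equipped with $A_{X_i}$: via the inner-product identification recorded in Remark \ref{remark:monotono}, the family $(v_j)_{j \le i}$ is a basis of $X_i$ whose lifts $(\hat v_j)$ are orthogonal in the $A_{X_i}$-induced inner product on $X_i \oplus \RR$, and $v_\star$ plays the role of the distinguished vector of Lemma \ref{lemma:vstar}. The sign dichotomy extracted from Lemma \ref{lemma:traceboundconverse} then gives $|Av_\star|^2 < 1$ precisely when $\tr A_{X_i} < 1$; and Lemma \ref{lemma:monotono} applied to the strict inclusion $X_i \subsetneq \RR^n$ (possible because $i < n$) yields $\tr A_{X_i} < \tr A \le 1$, completing the argument.

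Finally, the two roots $t_1 < 0 < t_2$ of $Q$ provide the two vectors $v = v_\star + t_k u_0 \in \Eout \cap X_{i+1}$. For the basis and orientation assertions, I would note that $v_\star \in X_i = \mathrm{span}(v_1, \ldots, v_i)$, so after subtracting appropriate multiples of $v_1, \ldots, v_i$ from the last column, $\det(v_1, \ldots, v_i, v_\star + tu_0)$ equals $t \cdot \det(v_1, \ldots, v_i, u_0)$; since $u_0 \notin X_i$ this is nonzero for every $t \ne 0$, and the opposite signs of $t_1, t_2$ yield bases of $X_{i+1}$ with opposite orientations. The main technical obstacle I foresee is precisely the transfer of Lemma \ref{lemma:traceboundconverse} to the subspace $X_i$: verifying that the restricted ambient $\llangle\cdot,\cdot\rrangle$ correctly corresponds to the inner product induced by $A_{X_i}$, per Remark \ref{remark:monotono}, and that the hypotheses of Lemma \ref{lemma:traceboundconverse} are satisfied in this reduced setting.
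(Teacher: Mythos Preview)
Your proposal is correct and follows essentially the same route as the paper: obtain $v_\star \in X_i$ via Lemma~\ref{lemma:vstar} restricted to $X_i$, use Lemmas~\ref{lemma:traceboundconverse} and~\ref{lemma:monotono} (transferred to $X_i$ via Remark~\ref{remark:monotono}) to place $v_\star$ strictly inside $\Eout$, and conclude that the affine line of candidates meets $\Eout \cap X_{i+1}$ transversally in two points lying on opposite sides of $X_i$. The only omission is that you assert the $v_j$ are linearly independent (needed both for your dimension count on $\ker\pi|_L$ and to apply Lemma~\ref{lemma:vstar} inside $X_i$) without justification; the paper secures this at the outset by invoking Lemma~\ref{lemma:project}.
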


\begin{proof}
From Lemma \ref{lemma:project}, $(v_j)_{j \le i}$ is a basis of $X_i$.
Apply Lemma \ref{lemma:vstar} to the restriction to $X_i$
to obtain $v_{\star} \in X_i$.
From Lemmas \ref{lemma:traceboundconverse} and \ref{lemma:monotono},
$v_\star$ is interior to $\Eout$.
Thus, the affine subspace of $X_{i+1}$ defined by the $i$ equations
$\langle \cdot, Av_i \rangle = -1$
is a line containing $v_\star$ and therefore crossing
$\Eout \cap X_{i+1}$ transversally in exactly two points.
These are our desired points.
There is one in each connected component
of $X_{i+1} \smallsetminus X_i$,
completing the proof.
\end{proof}

\begin{proof}[Proof of Theorem \ref{theo:euler}]
Recall that item (i) and the first claim in item (ii)
have already been proved in Section \ref{section:simplices}:
we just completed the proof of item (ii) by constructing
$\phi^{-1}: O(n) \to \Stinout$ when $\tr(A) = 1$.

If $\tr(A) < 1$, consider an auxiliary ellipsoid
$s\Eout$ for $s \le 1$.
The corresponding positive symmetric matrix 
is $\tilde A = s^{-1}A$ so that
$\tr(\tilde A) = s^{-1} \tr(A) \le 1$ for $s \in [1/\tr(A),1]$.
For each such value of $s$,
given $Q \in O(n)$ we obtain distinct simplices
$S_s$ with $S_s \in \Sfinout \smallsetminus \Stinout$ and $\phi(S_s) = Q$,
completing the proof of item (iii). 
\end{proof}

\section{Icosahedron and dodecahedron}

Theorems \ref{theo:fuss} and \ref{theo:euler} comprise
higher dimensional generalizations
for $k = 3$ and $k = 4$ of Poncelet porism.
It is natural to ask whether similar versions of Poncelet porism hold
for other values of $k$.
One tempting idea is to look at other regular polytopes,
such as dodecahedra and icosahedra
(recall that one is the dual of the other).
The icosahedron between two ellipsoids
does not appear to satisfy a similar porism:
we present a failed attempt.

\bigskip

We consider a special situation:
take $E_{\bout}$ to be the unit sphere $\Ss^{n-1}$
and $E_{\bin}$ to be the smaller ellipsoid with equation
\[ \frac{x^2}{a^2} + \frac{y^2}{a^2} + \frac{z^2}{b^2} = 1. \]
A convex polytope is \textit{combinatorially regular}
if it has the same combinatorial structure as the regular polytopes.
Thus, simplices, parallelotopes and cross polytopes 
are examples of combinatorially regular polytopes.
We consider convex combinatorially regular
centrally symmetric icosahedra
inscribed in $E_{\bout}$ and circumscribed to $E_{\bin}$.
We further simplify the problem by considering icosahedra
in two special positions, chosen so as to exploit symmetry.

\smallskip

We first consider icosahedra for which
the points $(0,0,\pm 1)$ are vertices.
The other $10$ vertices form two regular pentagons
on planes parallel to the $xy$ plane.
Up to rotation, the $z$ coordinate $z_{\penta} \in (0,1)$
for the top pentagon
in the only degree of freedom in the construction.
Up to symmetry, there are two faces which should be tangent to $E_{\bin}$;
it can be verified that in an open interval
$a$ and $b$ are smooth functions of $z_{\penta}$.
Applying the implicit function theorem,
for any value of $b$ between, say, $0.6$ and $0.9$,
there appears to exist a unique value of $a$
for which the construction works.
Thus, $a$ is a smooth function of $b$: $a = \alpha_1(b)$.

We next consider icosahedra with a pair of opposite faces
parallel to the $xy$ plane,
like a solid icosahedron sitting on a table.
More precisely, two opposite faces of the icosahedron
are equilateral triangles in the planes $z = \pm b$.
We may assume that the vertices of the top face are
$(s,0,b)$, $(-s/2,\pm s\sqrt{3}/2,b)$, where $s = \sqrt{1-b^2}$.
The other six vertices form two other equilateral triangles (not faces)
contained in the planes $z = \pm c$
(where $c \in (0,1)$ is another real parameter).
We may assume that the six vertices are
$\pm(-s',0,c)$, $\pm(s'/2,\pm s'\sqrt{3}/2,c)$.
Apart from the top and bottom faces,
there are, up to symmetry, two classes of faces:
the $6$ faces in the first class share an edge
with the top or bottom faces,
the $12$ faces in the other class
have three vertices with three different values of $z$.
At this point we have two degrees of freedom ($a$ and $c$)
and two equations, 
each equation verifying the tangency 
between the ellipsoid $E_{\bin}$
and the faces in one of the two classes.
Again by transversality, given $b$ there exists a unique pair $(a,c)$
for which the construction works.
In other words, $a$ is a smooth function of $b$: $a = \alpha_2(b)$.
Numerics indicate, however, that the two functions $\alpha_1$ and $\alpha_2$
are quite different.

\bigskip

\vfil
\newpage

\bigskip

\noindent Peter~C.~Gibson \\
Dept.~of Mathematics \& Statistics, York University,\\
4700 Keele St., Toronto, ON, Canada, M3J~1P3 

\smallskip

\noindent Nicolau~C.~Saldanha, Carlos Tomei \\
Depto.~de~Matemática, PUC-Rio, R. Mq. S. Vicente 225,\\
Rio de Janeiro, RJ 22451-900, Brazil

\end{document}